\documentclass[portuges,12pt,letter]{article}
\usepackage[centertags]{amsmath}
\usepackage{amsfonts}
\usepackage{newlfont}
\usepackage{amscd}
\usepackage{graphics}
\usepackage{epsfig}
\usepackage{indentfirst}
\usepackage{amsxtra}
\usepackage[latin1]{inputenc}
\usepackage{amssymb, amsmath}
\usepackage{amsthm}
\usepackage[mathscr]{eucal}

\newtheorem{thm}{Theorem}[section]

\newtheorem{remark}[thm]{Remark}

\setlength{\textwidth}{18cm} \setlength{\textheight}{22cm}
\setlength{\topmargin}{-2cm} \setlength{\oddsidemargin}{-1cm}
\author{Fabio Silva Botelho \\ Department of Mathematics \\ Federal University of  Santa Catarina, UFSC \\
Florian\'{o}polis, SC - Brazil}
\date{}
\title{\bf On the generalized method of lines applied to  the time-independent incompressible Navier-Stokes system } 

\begin{document}
\maketitle

\abstract{
In the first part of this article, we obtain a linear system whose the solution solves the time-independent incompressible Navier-Stokes
system for the special case in which the external forces vector is a gradient. In a second step we develop approximate solutions, also for the time independent incompressible Navier-Stokes system, through
the generalized method of lines. We recall that for such a method, the domain of the partial differential equation
in question is discretized in lines  and the concerning solution is written on these lines as functions of
the boundary conditions and  boundary shape.
Finally, we emphasize these last main results are established through  applications of the Banach fixed point theorem.
}
\section{Introduction }
This article develops solutions for the time-independent incompressible Navier-Stokes system through the generalized method of lines.
At this point we describe the system in question.

Consider $\Omega \subset \mathbb{R}^2$ an open, bounded and connected
set, whose the regular (Lipschitzian) internal boundary is denoted by $\Gamma_0 $ and the
regular external  one is denoted by $\Gamma_1 $. For a two-dimensional motion of a fluid on $\Omega$, we denote by $u:\Omega
\rightarrow \mathbb{R}$ the  velocity  field in the direction $x$ of
the Cartesian system $(x,y)$, by $v:\Omega \rightarrow \mathbb{R}$, the
velocity field in the direction $y$ and  by $p:\Omega \rightarrow
\mathbb{R}$, the pressure one. We define $P=p/ \rho$, where $\rho$
is the constant fluid density. Finally, $\nu$ denotes the viscosity coefficient
and $g$ denotes the gravity field. Under such notation and statements, the time-independent incompressible   Navier-Stokes   system of partial
differential equations is
expressed by,
\begin{gather}\label{910} \left\{
\begin{array}{ll}
 \nu \nabla^2 u-u \partial_x u-v \partial_y u- \partial_x P+g_x=0, & \text{ in }  \Omega, \\[8pt]
 \nu \nabla^2 v-u \partial_x v-v \partial_y v- \partial_y P+g_y=0, & \text{ in } \Omega,\\[8pt]
\partial_x u+ \partial_y v=0, & \text{ in } \Omega,
\end{array} \right.\end{gather}
\begin{gather}\label{911} \left\{
\begin{array}{ll}
 u=v=0, & \text{ on }  \Gamma_0,\\[8pt]
u=u_\infty, \; v=0, \; P=P_\infty, & \text{ on } \Gamma_1
\end{array} \right.\end{gather}

In principle we look for solutions $(u,v,P) \in W^{2,2}(\Omega)\times W^{2,2}(\Omega) \times W^{1,2}(\Omega)$ despite the
fact that less regular solutions are also possible specially concerning the weak formulation. Details about such Sobolev spaces may be found in \cite{1}.
General results on finite differences and existence theory for similar systems may be found in \cite{103} and \cite{890}, respectively.
\section{On the solution of  the  time-independent incompressible Navier-Stokes system through an associated linear one}

 Through the next result we obtain a linear system whose the solution also solves the time-independent incompressible Navier-Stokes system
 for the special case in which the external forces vector is a gradient.

 Similar results for the time-independent incompressible Euler and Navier-Stokes equations have been presented in \cite{120} and \cite{901,910}, respectively.

 Indeed in the works  \cite{901,910}, we have indicated a solution of the Navier-Stokes  given by
 $\mathbf{u}=(u,v)$ defined by
 \begin{gather}\label{918} \left\{
\begin{array}{l}
u=\partial_x
w_0+\partial_x w_1,
 \\ \\
v=\partial_y  w_0-\partial_y w_1,
\end{array} \right.\end{gather}
 where $w_0,w_1$ are solutions of the system
\begin{gather}\label{914} \left\{
\begin{array}{ll}
\partial_{xy} w_1=0 & \text{ in } \Omega,
\\
\nabla^2 w_0+\partial_{xx}w_1-\partial_{yy}w_1=0,& \text{ in } \Omega,
 \\
 u=u_0,& \text{ on } \Gamma,
 \\
 v=v_0,& \text{ on } \Gamma.
\end{array} \right.\end{gather}

Thus, in such a sense, the next result complements this previous one, by introducing a new function $w_2$ in the solution expressions, which
makes the concerning boundary conditions perfectly possible to be satisfied.

\begin{thm}\label{990} For $h=(\partial_x f,\partial_y f) \in C^1(\Omega;\mathbb{R}^2)$, consider the Navier-Stokes system similar as above indicated, that is,
\begin{gather}\label{912} \left\{
\begin{array}{ll}
 \nu \nabla^2u-u \partial_x u-v \partial_y u- \partial_x P+\partial_x f=0, & \text{ in }  \Omega,
 \\ \\
 \nu \nabla^2 v-u \partial_x v-v \partial_y v- \partial_y P+\partial_y f=0, & \text{ in } \Omega,
\\ \\
\partial_x u+ \partial_y v=0, & \text{ in } \Omega,
\end{array} \right.\end{gather}
with the boundary conditions

\begin{gather}\label{913} \left\{
\begin{array}{ll}
u=u_0,& \text{ on } \Gamma\equiv\Gamma_0 \cup \Gamma_1\\
v=v_0,& \text{ on } \Gamma,
 \\
 P=P_0, & \text{ on } \Gamma_1.
\end{array} \right.\end{gather}

A solution for such a Navier-Stokes system is given by
  $\mathbf{u}=(u,v)$ defined by
 \begin{gather}\label{918} \left\{
\begin{array}{l}
u=\partial_x
w_0+\partial_x w_1+\partial_y w_2,
 \\ \\
v=\partial_y  w_0-\partial_y w_1-\partial_x w_2,
\end{array} \right.\end{gather}
 where $w_0,w_1,w_2$ are solutions of the system
\begin{gather}\label{914} \left\{
\begin{array}{ll}
\nabla^2 w_2+2\partial_{xy} w_1=0 & \text{ in } \Omega,
\\
\nabla^2 w_0+\partial_{xx}w_1-\partial_{yy}w_1=0,& \text{ in } \Omega,
 \\
 u=u_0,& \text{ on } \Gamma,
 \\
 v=v_0,& \text{ on } \Gamma,
\end{array} \right.\end{gather}
and $P$ is a solution of the system indicated in  the first two lines of (\ref{912}) with boundary conditions indicated in the third line of (\ref{913}).
\end{thm}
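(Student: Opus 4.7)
The plan is to verify the three equations of the Navier--Stokes system (\ref{912}) in the following order: first the incompressibility constraint, then the momentum equations by exhibiting a scalar pressure $P$. The key observation driving the entire construction is that the representation (\ref{918}) has been engineered so that both the divergence and the scalar vorticity of $\mathbf{u}$ collapse precisely to the two PDEs appearing in (\ref{914}). Once divergence and vorticity are under control, the momentum equation reduces to an exact-form statement for the vector field $\mathbf{F} := \nu \nabla^2 \mathbf{u} - (\mathbf{u}\cdot\nabla)\mathbf{u} + \nabla f$.

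First I would substitute (\ref{918}) directly into $\partial_x u+\partial_y v$ and into $\partial_x v-\partial_y u$, obtaining, after cancellations,
\begin{align*}
\partial_x u + \partial_y v &= \nabla^2 w_0 + \partial_{xx} w_1 - \partial_{yy} w_1,\\
\partial_x v - \partial_y u &= -\bigl(\nabla^2 w_2 + 2\,\partial_{xy} w_1\bigr).
\end{align*}
The second line of (\ref{914}) says the first quantity vanishes, so incompressibility holds; the first line of (\ref{914}) says the second quantity vanishes, so the scalar vorticity $\omega := \partial_x v - \partial_y u$ is identically zero on $\Omega$. The boundary conditions $u=u_0$ and $v=v_0$ on $\Gamma$ are built into (\ref{914}) and therefore require nothing further.

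Next I would take the two-dimensional scalar curl $\partial_y F_1 - \partial_x F_2$ of the vector field $\mathbf{F}$ whose components are the left-hand sides of the momentum equations in (\ref{912}) (with $-\partial_i P$ absorbed). The gradient term $\nabla f$ drops out because $\partial_y\partial_x f=\partial_x\partial_y f$, and a direct expansion---using $\partial_x u+\partial_y v=0$ in the form $(\partial_x u+\partial_y v)\omega=0$---produces the classical vorticity identity
\begin{equation*}
\partial_y F_1 - \partial_x F_2 \;=\; -\nu\,\nabla^2 \omega \,+\, (\mathbf{u}\cdot\nabla)\,\omega.
\end{equation*}
Since $\omega\equiv 0$ by the previous step, both terms vanish, so $\mathbf{F}$ is curl-free. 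Assuming $\Omega$ is simply connected (or, more generally, that the circulation of $\mathbf{F}$ around the inner boundary $\Gamma_0$ vanishes), one obtains a scalar potential $P$ with $\nabla P=\mathbf{F}$, which is exactly the pair of momentum equations in (\ref{912}); the free additive constant is then fixed by the prescription $P=P_0$ on $\Gamma_1$.

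The main obstacle, as I see it, is topological: the domain $\Omega$ has an inner hole $\Gamma_0$ and is in general not simply connected, so the step ``$\operatorname{curl}\mathbf{F}=0 \Rightarrow \mathbf{F}=\nabla P$'' is not automatic and requires either a simple-connectedness hypothesis or a separate verification that $\oint_{\Gamma_0}\mathbf{F}\cdot d\boldsymbol{\tau}=0$. A secondary concern is the consistency of the data $P=P_0$ on all of $\Gamma_1$: once $\mathbf{F}$ is fixed, $P$ is determined up to a single additive constant, so the third line of (\ref{913}) should be interpreted as fixing that constant (or, equivalently, as defining the boundary trace $P_0$ a posteriori) rather than as an independent Dirichlet condition. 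Everything else in the proof is bookkeeping with mixed partials.
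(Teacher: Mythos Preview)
Your proposal is correct and follows the same overall architecture as the paper: verify the divergence condition from the second line of (\ref{914}), show that the two-dimensional curl of the momentum vector $\mathbf{F}$ vanishes using the first line of (\ref{914}), and then recover $P$ as a potential. The paper carries out the curl computation by brute-force symbolic expansion (it writes $\varphi:=\nabla^2 w_2+2\partial_{xy}w_1$ and checks, with a CAS, that $\partial_y h_1-\partial_x h_2$ and $\nu(\partial_y\nabla^2u-\partial_x\nabla^2v)$ are each linear combinations of $\varphi,\partial_x\varphi,\partial_y\varphi,\nabla^2\varphi$), whereas you observe directly that $\varphi=-\omega$, i.e.\ the flow is \emph{irrotational}, and then invoke the standard vorticity-transport identity $\partial_y F_1-\partial_x F_2=-\nu\nabla^2\omega+(\mathbf{u}\cdot\nabla)\omega+(\operatorname{div}\mathbf{u})\,\omega$. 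This is the same mechanism, but your organization is cleaner and explains \emph{why} the computation succeeds rather than merely recording that it does.

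Your remark about the topological obstruction is well taken: the annular domain $\Omega$ is not simply connected, so ``$\operatorname{curl}\mathbf{F}=0\Rightarrow\mathbf{F}=\nabla P$'' requires the extra hypothesis that the period $\oint_{\Gamma_0}\mathbf{F}\cdot d\boldsymbol{\tau}$ vanish. The paper's proof glosses over this point as well, so this is not a defect of your argument relative to the original; it is a genuine gap in the theorem as stated. Your observation about the boundary condition $P=P_0$ on $\Gamma_1$ being only a normalization (fixing the additive constant) is likewise accurate.
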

\begin{proof} For $w_0, w_1, w_2$ such that $\nabla^2 w_2+2\partial_{xy} w_1=0 \text{ in } \Omega$, $u$ and $v$ as indicated above and
defining $$h_1=u \partial_x u +v \partial_y u,$$ $$h_2=u \partial_x v +v \partial_y v$$
and $\varphi\equiv \nabla^2 w_2+2\partial_{xy} w_1=0,$ we have (you may check it using the softwares MATHEMATICA or MAPLE)
\begin{eqnarray}
&&\frac{\partial h_1}{\partial y}-\frac{\partial h_2}{\partial x}\nonumber \\ &=&
(-\partial_{yy}w_1+\partial_{yy} w_0-\partial_{xy} w_2) \varphi \nonumber \\ &&+(-\partial_y w_1+\partial_y w_0-\partial_x w_2) \partial_y \varphi
\nonumber \\ && +(\partial_{xy} w_2+\partial_{xx} w_1+\partial_{xx}w_0) \varphi
\nonumber \\ && +(\partial_y w_2+\partial_x w_1+\partial_x w_0) \partial_x \varphi \nonumber \\ &=& 0, \text{ in } \Omega.
\end{eqnarray}

Moreover, since $\varphi=\nabla^2 w_2+2\partial_{xy} w_1=0 \text{ in } \Omega$, we get
\begin{eqnarray}&&\nu(\partial_y \nabla^2u-\partial_x \nabla^2 v)+\partial_y (\partial_x f)-\partial_x(\partial_y f)
\nonumber \\ &=&\nu(2\partial_{xy}(\nabla^2w_1)+\nabla^4 w_2)+ \partial_y (\partial_x f)-\partial_x(\partial_y f)
\nonumber \\ &=& \nu \nabla^2 \varphi+\partial_{yx}f -\partial_{yx} f \nonumber \\ &=&0,\text{ in }  \Omega. \end{eqnarray}

Summarizing, we have obtained
\begin{eqnarray} && \partial_y\left(\nu \nabla^2u-u \partial_x u-v \partial_y u+\partial_x f\right)
\nonumber \\ &=& \partial_x\left(\nu \nabla^2 v-u \partial_x v-v \partial_y v+\partial_y f\right).
\end{eqnarray}

Also, the equation
$$\nabla^2 w_0+\partial_{xx}w_1-\partial_{yy}w_1=0,\text{ in }  \Omega$$
stands for $$\partial_x u+\partial_y v=0, \text{ in } \Omega.$$
From these last results we may obtain $P$ which satisfies the concerning boundary condition such that
\begin{gather}\label{912a} \left\{
\begin{array}{ll}
 \nu \nabla^2u-u \partial_x u-v \partial_y u- \partial_x P+\partial_x f=0, & \text{ in }  \Omega,
 \\ \\
 \nu \nabla^2v-u \partial_x v-v \partial_y v- \partial_y P+\partial_y f=0, & \text{ in } \Omega,
\\ \\
\partial_x u+ \partial_y v=0, & \text{ in } \Omega,
\end{array} \right.\end{gather}
This completes the proof.
\end{proof}
\section{The generalized method of lines for the Navier-Stokes system}

In this section we develop the solution for the Navier-Stokes system through the generalized method of lines, which was originally introduced in \cite{901}, with further developments in \cite{909,120}.
We consider  the boundary conditions $$u=u_0(x), \;\; v=v_0(x),\; P=P_0(x)\text{ on } \partial \Omega_0,$$
$$u=0,\;v=0, \;P=P_f(x) \text{ on } \partial \Omega_1,$$ where $$\Omega=\{(r,\theta): \;|\;  r(\theta) \leq r \leq 2r(\theta)\}$$
and where $r(\theta)$ is a positive, smooth  and periodic function with period $2\pi$, $$\partial \Omega=\partial \Omega_0 \cup \partial \Omega_1,$$
$$\partial \Omega_0=\{(r(\theta),\theta) \in \mathbb{R}^2\;:\; 0 \leq \theta \leq 2\pi\}$$ and
 $$\partial \Omega_1=\{(2r(\theta),\theta) \in \mathbb{R}^2\;:\; 0 \leq \theta \leq 2\pi\}.$$

For $\nu=1$, neglecting the gravity effects, the corresponding Navier-Stokes homogeneous system, in function of the variables $(t,\theta)$ where $t=r/r(\theta),$ is given by
\begin{equation}\label{us1234}L(u)-ud_1(u)-vd_2(u)-d_1(P)=0,\end{equation}
\begin{equation}\label{us1235}L(v)-ud_1(v)-vd_2(v)-d_2(P)=0,\end{equation}
\begin{equation}\label{us1236}d_1(u)+d_2(v)=0,\end{equation}
where generically  $$L(u)=\nabla^2u,$$ $$d_1(u)=\partial_x u,$$  and $$d_ 2(u)=\partial_y u$$   will be specified in the next lines, in function of $(t,\theta)$.

Firstly, $L$ is such that
\begin{eqnarray}
L(u)\left(\frac{r(\theta)^2}{f_0(\theta)}\right)=\frac{\partial^2 u}{\partial t^2}&+&\frac{1}{t}f_2(\theta)\frac{\partial u}{\partial t}\nonumber \\
&+&\frac{1}{t}f_3(\theta)\frac{\partial^2 u}{\partial \theta \partial t}+\frac{f_4(\theta)}{t^2} \frac{\partial^2 u}{\partial \theta^2},
\end{eqnarray}
in $\Omega$. Here $f_0(\theta), f_2(\theta), \;f_3(\theta)$ and $f_4(\theta)$ are known functions.

More specifically, denoting $$f_1(\theta)=\frac{-r'(\theta)}{r(\theta)},$$
we have:
$$f_0(\theta)=1+f_1(\theta)^2,$$
$$f_2(\theta)=1+\frac{f_1'(\theta)}{1+f_1(\theta)^2},$$ $$f_3(\theta)=\frac{2f_1(\theta)}{1+f_1(\theta)^2},$$ and
$$f_4(\theta)=\frac{1}{1+f_1(\theta)^2}.$$
Also $d_1$ and $d_2$ are expressed by $$d_1u = \hat{f}_5(\theta) \frac{\partial u}{\partial t} + (\hat{f}_6(\theta)/t)\frac{\partial u}{\partial \theta},$$
$$d_2u = \hat{f}_7(\theta) \frac{\partial u}{\partial t} + (\hat{f}_{8}(\theta)/t)\frac{\partial u}{\partial \theta}.$$
Where $$\hat{f}_5(\theta)=\cos(\theta)/r(\theta)+\sin(\theta)r'(\theta)/r^2(\theta),$$ $$\hat{f}_6(\theta)=-\sin(\theta)/r(\theta),$$
$$\hat{f}_7(\theta)=\sin(\theta)/r(\theta)-\cos(\theta)r'(\theta)/r^2(\theta),$$ $$\hat{f}_8(\theta)=\cos(\theta)/r(\theta).$$

We also define $$h_3(\theta)=\frac{f_0(\theta)}{r(\theta)^2},$$ $$f_5(\theta)=\left(\frac{r(\theta)^2}{f_0(\theta)}\right)\hat{f}_5(\theta),$$
$$f_6(\theta)=\left(\frac{r(\theta)^2}{f_0(\theta)}\right)\hat{f}_6(\theta),$$
$$f_7(\theta)=\left(\frac{r(\theta)^2}{f_0(\theta)}\right)\hat{f}_7(\theta),$$
and
$$f_8(\theta)=\left(\frac{r(\theta)^2}{f_0(\theta)}\right)\hat{f}_8(\theta),$$
Observe that $t \in [1,2]$ in $\Omega$.

From equations (\ref{us1234}) and (\ref{us1235})  we may write
\begin{eqnarray}\label{us1237}
&&d_1(L(u)-ud_1(u)-vd_2(u)-d_1(P)) \nonumber \\ &&+d_2(L(v)-ud_1(v)-vd_2(v)-d_2(P))=0,
\end{eqnarray}
From (\ref{us1236}) we have $$d_1[L(u)]+d_2[L(v)]=L(d_1 (u)+d_2(v))=0,$$ and considering that
$$d_1(d_1(P))+d_2(d_2(P))=L(P),$$ from (\ref{us1237}) we have
$$L(P)+d_1(u)^2+d_2(v)^2+2d_2(u)d_1(v)=0, \text{ in }\Omega.$$

Hence, in fact we solve the approximate system (this indeed is not exactly the Navier-Stokes one):
\begin{equation}\label{eq1}L(u)-ud_1(u)-vd_2(u)-d_1(P)=0,\end{equation}
\begin{equation}\label{eq2}L(v)-ud_1(v)-vd_2(v)-d_2(P)=0,\end{equation}
\begin{equation}\label{eq3}L(P)+d_1(u)^2+d_2(v)^2+2d_2(u)d_1(v)=0, \text{ in }\Omega.\end{equation}
\begin{remark} Who taught me how to obtain this last approximate system was Professor Alvaro de Bortoli of Federal University of Rio Grande do Sul,
UFRGS, Porto Alegre, RS- Brazil.
\end{remark}

At this point, discretizing only in $t$ (in $N$ lines), defining $d=1/N$ and $t_n=1+nd$, $\forall n \in \{1,\ldots,N-1\}$,
 we represent such a concerning system in partial finite differences.

Denoting $$\hat{d}_1(u_n,u_{n-1})=f_5(x)\frac{(u_n-u_{n-1})}{d}+\frac{f_6(x)}{t_n} \frac{\partial u_n}{\partial x},$$
and
$$\hat{d}_2(u_n,u_{n-1})=f_7(x)\frac{(u_n-u_{n-1})}{d}+\frac{f_8(x)}{t_n} \frac{\partial u_n}{\partial x},$$
where $x$ stands for $\theta$, in partial finite differences, equation (\ref{eq1}) stands for
\begin{eqnarray}
&&\frac{u_{n+1}-2u_n+u_{n-1}}{d^2}+\frac{f_2(x)}{t_n}\frac{u_n-u_{n-1}}{d}
\nonumber \\ &&+\frac{f_3(x)}{t_n}\frac{\partial}{\partial x}\left(\frac{u_{n}-u_{n-1}}{d}\right)+\frac{f_4(x)}{t_n^2}\frac{\partial^2 u_n}{\partial x^2}
\nonumber \\ &&-u_n \hat{d}_1(u_n,u_{n-1})-v_n \hat{d}_2(u_n,u_{n-1})-\hat{d}_1(P_n,P_{n-1})=0.
\end{eqnarray}

Hence, denoting $\mathbf{u}=(u,v,P)$, we have $$u_n=(T_1)_n(\mathbf{u}_{n+1},\mathbf{u}_n,\mathbf{u}_{n-1}),$$
where
\begin{eqnarray}
&&(T_1)_n(\mathbf{u}_{n+1},\mathbf{u}_n,\mathbf{u}_{n-1})\nonumber \\ &=&\left(u_{n+1}+u_n+u_{n-1}+\frac{f_2(x)}{t_n}(u_{n}-u_{n-1})d\right.
\nonumber \\ &&+\frac{f_3(x)}{t_n}\frac{\partial}{\partial x}(u_{n}-u_{n-1})d+\frac{f_4(x)}{t_n^2}\frac{\partial^2 u_n}{\partial^2 x}d^2
\nonumber \\ &&\left.-u_n \hat{d}_1(u_n,u_{n-1})d^2-v_n \hat{d}_2(u_n,u_{n-1})d^2-\hat{d}_1(P_n,P_{n-1})d^2\right)/3.0.\nonumber
\end{eqnarray}

Similarly, equation (\ref{eq2}) stands for

\begin{eqnarray}
&&\frac{v_{n+1}-2v_n+v_{n-1}}{d^2}+\frac{f_2(x)}{t_n}\frac{v_{n}-v_{n-1}}{d}
\nonumber \\ &&+\frac{f_3(x)}{t_n}\frac{\partial}{\partial x}\left(\frac{v_{n}-v_{n-1}}{d}\right)+\frac{f_4(x)}{t_n^2}\frac{\partial^2 v_n}{\partial x^2}
\nonumber \\ &&-u_n \hat{d}_1(v_n,v_{n-1})-v_n \hat{d}_2(v_n,v_{n-1})-\hat{d}_2(P_n,P_{n-1})=0.
\end{eqnarray}

Hence $$v_n=(T_2)_n(\mathbf{u}_{n+1},\mathbf{u}_n,\mathbf{u}_{n-1}),$$
where
\begin{eqnarray}
&&(T_2)_n(\mathbf{u}_{n+1},\mathbf{u}_n,\mathbf{u}_{n-1})\nonumber \\ &=&\left(v_{n+1}+v_n+v_{n-1}+\frac{f_2(x)}{t_n}(v_{n}-v_{n-1})d\right.
\nonumber \\ &&+\frac{f_3(x)}{t_n}\frac{\partial}{\partial x}(v_{n}-v_{n-1})d+\frac{f_4(x)}{t_n^2}\frac{\partial^2 v_n}{\partial x^2}d^2
\nonumber \\ &&\left.-u_n \hat{d}_1(v_n,v_{n-1})d^2-v_n \hat{d}_2(v_n,v_{n-1})d^2-\hat{d}_2(P_n,P_{n-1})d^2\right)/3.0. \nonumber
\end{eqnarray}

Finally, (\ref{eq3}) stands for
\begin{eqnarray}
&&\frac{P_{n+1}-2P_n+P_{n-1}}{d^2}+\frac{f_2(x)}{t_n}\frac{P_{n}-P_{n-1}}{d}
\nonumber \\ &&+\frac{f_3(x)}{t_n}\frac{\partial}{\partial x}\left(\frac{P_{n}-P_{n-1}}{d}\right)+\frac{f_4(x)}{t_n^2}\frac{\partial^2 P_n}{\partial x^2}
\nonumber \\ &&+ h_3(x) \left(\hat{d}_1(u_n,u_{n-1})^2+\hat{d}_2(v_n,v_{n-1})^2+2 \hat{d}_2(u_n,u_{n-1})\hat{d}_1(v_,v_{n-1})\right)=0.
\end{eqnarray}

Hence $$P_n=(T_3)_n(\mathbf{u}_{n+1},\mathbf{u}_n,\mathbf{u}_{n-1}),$$
where
\begin{eqnarray}
&&(T_3)_n(\mathbf{u}_{n+1},\mathbf{u}_n,\mathbf{u}_{n-1})\nonumber \\ &=&\left(P_{n+1}+P_n+P_{n-1}+\frac{f_2(x)}{t_n}(P_{n}-P_{n-1})d\right.
\nonumber \\ &&+\frac{f_3(x)}{t_n}\frac{\partial}{\partial x}(P_{n}-P_{n-1})d+\frac{f_4(x)}{t_n^2}\frac{\partial^2 P_n}{\partial x^2}d^2
\nonumber \\ &&\left. +h_3(x)\left(\hat{d}_1(u_n,u_{n-1})^2d^2+\hat{d}_2(v_n,v_{n-1})^2d^2+2 \hat{d}_2(u_n,u_{n-1})\hat{d}_1(v_,v_{n-1})d^2\right)\right)/3.0. \nonumber
\end{eqnarray}

Summarizing, we may write
$$\mathbf{u}_n= \hat{T}_n(\mathbf{u}_{n+1},\mathbf{u}_n,\mathbf{u}_{n-1}),$$
where \begin{eqnarray}&&\hat{T}_n(\mathbf{u}_{n+1},\mathbf{u}_n,\mathbf{u}_{n-1})\nonumber \\ &=&((T_1)_n(\mathbf{u}_{n+1},\mathbf{u}_n,\mathbf{u}_{n-1}),
(T_2)_n(\mathbf{u}_{n+1},\mathbf{u}_n,\mathbf{u}_{n-1}),(T_3)_n(\mathbf{u}_{n+1},\mathbf{u}_n,\mathbf{u}_{n-1})),
 \end{eqnarray}
 $\forall n \in \{1,\ldots,N-1\}.$

Therefore, for $n=1$ we obtain

$$\mathbf{u}_1= \hat{T}_1(\mathbf{u}_2,\mathbf{u}_1,\mathbf{u}_0).$$

We solve such an equation through the Banach fixed point theorem.

\begin{enumerate}
\item First set $$(\mathbf{u}_1)^1=\mathbf{u}_2.$$
\item In a second step, define $\{\mathbf{u}_1^k\}$ such that
$$\mathbf{u}_1^{k+1}=\hat{T}_1(\mathbf{u}_2,\mathbf{u}_1^k,\mathbf{u}_0),\; \forall k \in \mathbb{N}.$$
\item Finally obtain
$$\mathbf{u}_1=\lim_{k \rightarrow \infty} \mathbf{u}_1^k \equiv F_1(\mathbf{u}_2,\mathbf{u}_0).$$
\end{enumerate}

Now, reasoning inductively, having  $$\mathbf{u}_{n-1}=F_{n-1}(\mathbf{u}_n,\mathbf{u}_0),$$ we obtain $\mathbf{u}_{n}$ as indicated
in the next lines.
\begin{enumerate}
\item First set $$(\mathbf{u}_n)^1=\mathbf{u}_{n+1}.$$
\item In a second step, define $\{\mathbf{u}_n^k\}$ such that
$$\mathbf{u}_n^{k+1}=\hat{T}_n(\mathbf{u}_{n+1},\mathbf{u}_n^k,\mathbf{u}_0),\; \forall k \in \mathbb{N}.$$
\item Finally obtain
$$\mathbf{u}_n=\lim_{k \rightarrow \infty} \mathbf{u}_n^k\equiv F_n(\mathbf{u}_{n+1},\mathbf{u}_0).$$
\end{enumerate}

Thus, reasoning inductively we have obtained $$\mathbf{u}_n=F_n(\mathbf{u}_{n+1},\mathbf{u}_0),\; \forall n \in \{1,\ldots,N-1\}.$$

In particular, for $n=N-1,$ we have $\mathbf{u}_N=\mathbf{u}_f=(u_f,v_f,P_f).$

Therefore $$\mathbf{u}_{N-1}=F_{N-1}(\mathbf{u}_N,\mathbf{u}_0)\equiv H_{N-1}(\mathbf{u}_f,\mathbf{u}_0).$$

From this we obtain
$$\mathbf{u}_{N-2}=F_{N-2}(\mathbf{u}_{N-1},\mathbf{u}_0)\equiv H_{N-2}(\mathbf{u}_f,\mathbf{u}_0),$$
and so on, up to finding
$$\mathbf{u}_1=F_{1}(\mathbf{u}_{2},\mathbf{u}_0)\equiv H_{1}(\mathbf{u}_f,\mathbf{u}_0).$$

The problem is then solved.

With such  results in mind, with a software similar to those presented in \cite{120A}, truncating the concerning series solutions for terms
  of order up to $d^2$ (in $d$), for the field of velocity $u$ we have obtained the following expressions for the lines (here $x$ stands for $\theta$):

\begin{eqnarray}
Line\;1&& \nonumber \\
u_1(x)&=& -0.045 f_5(x) P_f(x)+0.045 f_5(x) P_0(x)
\nonumber \\ &&+0.899 u_0(x)-0.034 f_2(x)u_0(x) +0.029 f_5(x) u_0(x)^2
\nonumber \\ && +0.029 f_7(x) u_0(x)v_0(x)-0.011 f_6(x) P_f'\nonumber \\ &&
 -0.022 f_6(x) P_0'(x)-0.034 f_3(x) u_0'(x) \nonumber \\ && -0.016 f_6(x)u_0(x)u_0'(x)-0.016 f_8(x) v_0(x) u_0'(x)
 \nonumber \\ && +0.018 f_4(x)u_0''(x) \nonumber \end{eqnarray}

 \begin{eqnarray}
Line\;2&& \nonumber \\
u_2(x)&=& -0.081 f_5(x) P_f(x)+0.081 f_5(x) P_0(x)
\nonumber \\ &&+0.799 u_0(x)-0.034 f_2(x)u_0(x) +0.059 f_5(x) u_0(x)^2
\nonumber \\ && +0.048 f_7(x) u_0(x)v_0(x)-0.022 f_6(x) P_f'\nonumber \\ &&
 -0.036 f_6(x) P_0'(x)-0.059 f_3(x) u_0'(x) \nonumber \\ &&-0.025 f_6(x)u_0(x)u_0'(x)-0.025 f_8(x) v_0(x) u_0'(x)
 \nonumber \\ && +0.028 f_4(x)u_0''(x) \nonumber \end{eqnarray}

\begin{eqnarray}
Line\;3&& \nonumber \\
u_3(x)&=& -0.106 f_5(x) P_f(x)+0.106 f_5(x) P_0(x)
\nonumber \\ &&+0.698 u_0(x)-0.075 f_2(x)u_0(x) +0.060 f_5(x) u_0(x)^2
\nonumber \\ && +0.060 f_7(x) u_0(x)v_0(x)-0.031 f_6(x) P_f'\nonumber \\ &&
 -0.044 f_6(x) P_0'(x)-0.075 f_3(x) u_0'(x) \nonumber \\&& -0.029 f_6(x)u_0(x)u_0'(x)-0.029 f_8(x) v_0(x) u_0'(x)
 \nonumber \\ && +0.033 f_4(x)u_0''(x) \nonumber \end{eqnarray}
\begin{eqnarray}
Line\;4&& \nonumber \\
u_4(x)&=& -0.121 f_5(x) P_f(x)+0.121 f_5(x) P_0(x)
\nonumber \\ &&+0.597 u_0(x)-0.084 f_2(x)u_0(x) +0.064 f_5(x) u_0(x)^2
\nonumber \\ && +0.064 f_7(x) u_0(x)v_0(x)-0.037 f_6(x) P_f'\nonumber \\ &&
 -0.046 f_6(x) P_0'(x)-0.084 f_3(x) u_0'(x) \nonumber \\ &&-0.029 f_6(x)u_0(x)u_0'(x)-0.029 f_8(x) v_0(x) u_0'(x)
 \nonumber \\ && +0.034 f_4(x)u_0''(x) \nonumber \end{eqnarray}

\begin{eqnarray}
Line\;5&& \nonumber \\
u_5(x)&=& -0.126 f_5(x) P_f(x)+0.126 f_5(x) P_0(x)
\nonumber \\ &&+0.497 u_0(x)-0.086 f_2(x)u_0(x) +0.062 f_5(x) u_0(x)^2
\nonumber \\ && +0.062 f_7(x) u_0(x)v_0(x)-0.041 f_6(x) P_f'\nonumber \\ &&
 -0.044 f_6(x) P_0'(x)-0.086 f_3(x) u_0'(x) \nonumber \\ &&-0.026 f_6(x)u_0(x)u_0'(x)-0.026 f_8(x) v_0(x) u_0'(x)
 \nonumber \\ && +0.032 f_4(x)u_0''(x) \nonumber \end{eqnarray}

\begin{eqnarray}
Line\;6&& \nonumber \\
u_6(x)&=& -0.121 f_5(x) P_f(x)+0.121 f_5(x) P_0(x)
\nonumber \\ &&+0.397 u_0(x)-0.080 f_2(x)u_0(x) +0.056 f_5(x) u_0(x)^2
\nonumber \\ && +0.056 f_7(x) u_0(x)v_0(x)-0.041 f_6(x) P_f'\nonumber \\ &&
 -0.031 f_6(x) P_0'(x)-0.069 f_3(x) u_0'(x) \nonumber \\ &&-0.017 f_6(x)u_0(x)u_0'(x)-0.017 f_8(x) v_0(x) u_0'(x)
 \nonumber \\ && +0.022 f_4(x)u_0''(x) \nonumber \end{eqnarray}

\begin{eqnarray}
Line\;7&& \nonumber \\
u_7(x)&=& -0.105 f_5(x) P_f(x)+0.105 f_5(x) P_0(x)
\nonumber \\ &&+0.297 u_0(x)-0.069 f_2(x)u_0(x) +0.045 f_5(x) u_0(x)^2
\nonumber \\ && +0.045 f_7(x) u_0(x)v_0(x)-0.037 f_6(x) P_f'\nonumber \\ &&
 -0.031 f_6(x) P_0'(x)-0.069 f_3(x) u_0'(x) \nonumber \\ &&-0.017 f_6(x)u_0(x)u_0'(x)-0.017 f_8(x) v_0(x) u_0'(x)
 \nonumber \\ && +0.022 f_4(x)u_0''(x) \nonumber \end{eqnarray}

\begin{eqnarray}
Line\;8&& \nonumber \\
u_8(x)&=& -0.080 f_5(x) P_f(x)+0.080 f_5(x) P_0(x)
\nonumber \\ &&+0.198 u_0(x)-0.051 f_2(x)u_0(x) +0.032 f_5(x) u_0(x)^2
\nonumber \\ && +0.032 f_7(x) u_0(x)v_0(x)-0.029 f_6(x) P_f'\nonumber \\ &&
 -0.021 f_6(x) P_0'(x)-0.051 f_3(x) u_0'(x) \nonumber \\ &&-0.011 f_6(x)u_0(x)u_0'(x)-0.011 f_8(x) v_0(x) u_0'(x)
 \nonumber \\ && +0.015 f_4(x)u_0''(x) \nonumber \end{eqnarray}
\begin{eqnarray}
Line\;9&& \nonumber \\
u_9(x)&=& -0.045 f_5(x) P_f(x)+0.045 f_5(x) P_0(x)
\nonumber \\ &&+0.099 u_0(x)-0.028 f_2(x)u_0(x) +0.016 f_5(x) u_0(x)^2
\nonumber \\ && +0.016 f_7(x) u_0(x)v_0(x)-0.017 f_6(x) P_f'\nonumber \\ &&
 -0.022 f_6(x) P_0'(x)-0.012 f_3(x) u_0'(x) \nonumber \\ &&-0.006 f_6(x)u_0(x)u_0'(x)-0.006 f_8(x) v_0(x) u_0'(x)
 \nonumber \\ && +0.008 f_4(x)u_0''(x) \nonumber \end{eqnarray}

For the field of velocity $v$, we have obtained  the following expressions for the lines:

\begin{eqnarray}
Line\;1&& \nonumber \\
v_1(x)&=& -0.045 f_7(x) P_f(x)+0.045 f_7(x) P_0(x)
 \nonumber \\ && +0.899 v_0(x)-0.034 f_2(x)v_0(x)+0.029 f_5(x) u_0(x)v_0(x)
 \nonumber \\ &&+0.029 f_7(x)v_0(x)^2-0.011 f_8(x)P_f'(x) \nonumber \\ &&
 -0.022 f_8(x)P_0'(x)-0.034 f_3(x) v_0'(x)
 \nonumber \\ && -0.016 f_6(x)u_0(x)v_0'(x) -0.016 f_8(x)v_0(x)v_0'(x)
  \nonumber \\ && +0.018f_4(x)v_0''(x) \nonumber
\end{eqnarray}
\begin{eqnarray}
Line\;2&& \nonumber \\
v_2(x)&=& -0.081 f_7(x) P_f(x)+0.081 f_7(x) P_0(x)
 \nonumber \\ && +0.799 v_0(x)-0.059 f_2(x)v_0(x)+0.048 f_5(x) u_0(x)v_0(x)
 \nonumber \\ &&+0.048 f_7(x)v_0(x)^2-0.022 f_8(x)P_f'(x) \nonumber \\ &&
 -0.036 f_8(x)P_0'(x)-0.059 f_3(x) v_0'(x)
 \nonumber \\ && -0.025 f_6(x)u_0(x)v_0'(x) -0.025 f_8(x)v_0(x)v_0'(x)
  \nonumber \\ && +0.028f_4(x)v_0''(x) \nonumber
  \end{eqnarray}
 \begin{eqnarray}
Line\;3&& \nonumber \\
v_3(x)&=& -0.106 f_7(x) P_f(x)+0.106 f_7(x) P_0(x)
 \nonumber \\ && +0.698 v_0(x)-0.075 f_2(x)v_0(x)+0.060 f_5(x) u_0(x)v_0(x)
 \nonumber \\ &&+0.060 f_7(x)v_0(x)^2-0.031 f_8(x)P_f'(x) \nonumber \\ &&
 -0.044 f_8(x)P_0'(x)-0.075 f_3(x) v_0'(x)
 \nonumber \\ && -0.029 f_6(x)u_0(x)v_0'(x) -0.029 f_8(x)v_0(x)v_0'(x)
  \nonumber \\ && +0.033 f_4(x)v_0''(x) \nonumber
  \end{eqnarray}
 \begin{eqnarray}
Line\;4&& \nonumber \\
v_4(x)&=& -0.121 f_7(x) P_f(x)+0.121 f_7(x) P_0(x)
 \nonumber \\ && +0.597 v_0(x)-0.084 f_2(x)v_0(x)+0.064 f_5(x) u_0(x)v_0(x)
 \nonumber \\ &&+0.064 f_7(x)v_0(x)^2-0.037 f_8(x)P_f'(x) \nonumber \\ &&
 -0.046 f_8(x)P_0'(x)-0.084 f_3(x) v_0'(x)
 \nonumber \\ && -0.029 f_6(x)u_0(x)v_0'(x) -0.029 f_8(x)v_0(x)v_0'(x)
  \nonumber \\ && +0.034f_4(x)v_0''(x) \nonumber
  \end{eqnarray}
\begin{eqnarray}
Line\;5&& \nonumber \\
v_5(x)&=& -0.126 f_7(x) P_f(x)+0.126 f_7(x) P_0(x)
 \nonumber \\ && +0.497 v_0(x)-0.086 f_2(x)v_0(x)+0.062 f_5(x) u_0(x)v_0(x)
 \nonumber \\ &&+0.062 f_7(x)v_0(x)^2-0.041 f_8(x)P_f'(x) \nonumber \\ &&
 -0.044 f_8(x)P_0'(x)-0.086 f_3(x) v_0'(x)
 \nonumber \\ && -0.026 f_6(x)u_0(x)v_0'(x) -0.026 f_8(x)v_0(x)v_0'(x)
  \nonumber \\ && +0.032 f_4(x)v_0''(x) \nonumber
  \end{eqnarray}
 \begin{eqnarray}
Line\;6&& \nonumber \\
v_6(x)&=& -0.121 f_7(x) P_f(x)+0.121 f_7(x) P_0(x)
 \nonumber \\ && +0.397 v_0(x)-0.080 f_2(x)v_0(x)+0.056 f_5(x) u_0(x)v_0(x)
 \nonumber \\ &&+0.056 f_7(x)v_0(x)^2-0.022 f_8(x)P_f'(x) \nonumber \\ &&
 -0.041 f_8(x)P_0'(x)-0.080 f_3(x) v_0'(x)
 \nonumber \\ && -0.022 f_6(x)u_0(x)v_0'(x) -0.022 f_8(x)v_0(x)v_0'(x)
  \nonumber \\ && +0.028f_4(x)v_0''(x) \nonumber
  \end{eqnarray}
 \begin{eqnarray}
Line\;7&& \nonumber \\
v_7(x)&=& -0.105 f_7(x) P_f(x)+0.105 f_7(x) P_0(x)
 \nonumber \\ && +0.297 v_0(x)-0.069 f_2(x)v_0(x)+0.045 f_5(x) u_0(x)v_0(x)
 \nonumber \\ &&+0.045 f_7(x)v_0(x)^2-0.037 f_8(x)P_f'(x) \nonumber \\ &&
 -0.031 f_8(x)P_0'(x)-0.069 f_3(x) v_0'(x)
 \nonumber \\ && -0.017 f_6(x)u_0(x)v_0'(x) -0.017 f_8(x)v_0(x)v_0'(x)
  \nonumber \\ && +0.022 f_4(x)v_0''(x) \nonumber
  \end{eqnarray}
\begin{eqnarray}
Line\;8&& \nonumber \\
v_8(x)&=& -0.080 f_7(x) P_f(x)+0.080 f_7(x) P_0(x)
 \nonumber \\ && +0.198 v_0(x)-0.051 f_2(x)v_0(x)+0.032 f_5(x) u_0(x)v_0(x)
 \nonumber \\ &&+0.032 f_7(x)v_0(x)^2-0.029 f_8(x)P_f'(x) \nonumber \\ &&
 -0.021 f_8(x)P_0'(x)-0.051 f_3(x) v_0'(x)
 \nonumber \\ && -0.011 f_6(x)u_0(x)v_0'(x) -0.011 f_8(x)v_0(x)v_0'(x)
  \nonumber \\ && +0.015 f_4(x)v_0''(x) \nonumber
  \end{eqnarray}
\begin{eqnarray}
Line\;9&& \nonumber \\
v_9(x)&=& -0.045 f_7(x) P_f(x)+0.045 f_7(x) P_0(x)
 \nonumber \\ && +0.099 v_0(x)-0.028 f_2(x)v_0(x)+0.016 f_5(x) u_0(x)v_0(x)
 \nonumber \\ &&+0.016 f_7(x)v_0(x)^2-0.017 f_8(x)P_f'(x) \nonumber \\ &&
 -0.011 f_8(x)P_0'(x)-0.029 f_3(x) v_0'(x)
 \nonumber \\ && -0.057 f_6(x)u_0(x)v_0'(x) -0.057 f_8(x)v_0(x)v_0'(x)
  \nonumber \\ && +0.008 f_4(x)v_0''(x) \nonumber
  \end{eqnarray}
Finally, for the field of pressure $P$, we have obtained the following lines:

\begin{eqnarray}
Line\;1&& \nonumber \\
P_1(x)&=& 0.101 P_f(x)+0.034 f_2(x) P_f(x) \nonumber \\ &&
+0.899 P_0(x)-0.034 f_2(x) P_0(x)+0.046 h_3(x) f_5(x)^2 u_0(x)^2
\nonumber \\ && +0.092 h_3(x)f_5(x)f_7(x) u_0(x)v_0(x)+0.046  h_3(x)f_7(x)^2v_0(x)^2
\nonumber \\ &&+0.034 f_3(x) P_f'(x)-0.034 f_3(x) P_0'(x)
 \nonumber \\ &&-0.045  h_3(x)f_5(x) f_6(x)u_0(x) u_0'(x)-0.045  h_3(x)f_5(x)f_8(x)v_0(x)u_0'(x)
 \nonumber \\ &&+0.014  h_3(x)f_6(x)^2u_0'(x)^2-0.045  h_3(x)f_6(x) f_7(x) u_0(x) v_0'(x)
 \nonumber \\ &&-0.045 h_3(x) f_7(x) f_8(x) v_0(x)v_0'(x)+0.027 h_3(x) f_6(x)f_8(x) u_0'(x)v_0'(x)
  \nonumber \\ && +0.014 h_3(x) f_8(x)^2 v_0'(x)^2+0.008 f_4(x) P_f''(x)
  \nonumber \\ &&+0.018 f_4(x)P_0''(x)
  \nonumber \end{eqnarray}
\begin{eqnarray}
Line\;2&& \nonumber \\
P_2(x)&=& 0.201 P_f(x)+0.059 f_2(x) P_f(x) \nonumber \\ &&
+0.799 P_0(x)-0.059 f_2(x) P_0(x)+0.082  h_3(x)f_5(x)^2 u_0(x)^2
\nonumber \\&& +0.163  h_3(x)f_5(x)f_7(x) u_0(x)v_0(x)+0.082  h_3(x)f_7(x)^2v_0(x)^2
\nonumber \\ &&+0.059 f_3(x) P_f'(x)-0.059 f_3(x) P_0'(x)
 \nonumber \\ &&-0.074  h_3(x)f_5(x) f_6(x)u_0(x) u_0'(x)-0.074  h_3(x)f_5(x)f_8(x)v_0(x)u_0'(x)
 \nonumber \\ &&+0.020  h_3(x)f_6(x)^2u_0'(x)^2-0.074  h_3(x)f_6(x) f_7(x) u_0(x) v_0'(x)
 \nonumber \\ &&-0.074  h_3(x)f_7(x) f_8(x) v_0(x)v_0'(x)+0.041  h_3(x)f_6(x)f_8(x) u_0'(x)v_0'(x)
  \nonumber \\ && +0.020  h_3(x)f_8(x)^2 v_0'(x)^2+0.015 f_4(x) P_f''(x)
  \nonumber \\ &&+0.028 f_4(x)P_0''(x)
  \nonumber \end{eqnarray}

\begin{eqnarray}
Line\;3&& \nonumber \\
P_3(x)&=& 0.302 P_f(x)+0.075f_2(x) P_f(x) \nonumber \\ &&
+0.698 P_0(x)-0.075 f_2(x) P_0(x)+0.107 h_3(x) f_5(x)^2 u_0(x)^2
\nonumber \\&& +0.214  h_3(x)f_5(x)f_7(x) u_0(x)v_0(x)+0.107  h_3(x)f_7(x)^2v_0(x)^2
\nonumber \\ &&+0.075 f_3(x) P_f'(x)-0.075 f_3(x) P_0'(x)
 \nonumber \\ &&-0.089  h_3(x)f_5(x) f_6(x)u_0(x) u_0'(x)-0.089  h_3(x)f_5(x)f_8(x)v_0(x)u_0'(x)
 \nonumber \\ &&+0.023  h_3(x)f_6(x)^2u_0'(x)^2-0.089  h_3(x)f_6(x) f_7(x) u_0(x) v_0'(x)
 \nonumber \\ &&-0.089  h_3(x)f_7(x) f_8(x) v_0(x)v_0'(x)+0.045  h_3(x)f_6(x)f_8(x) u_0'(x)v_0'(x)
  \nonumber \\ && +0.023  h_3(x)f_8(x)^2 v_0'(x)^2+0.021 f_4(x) P_f''(x)
  \nonumber \\ &&+0.033 f_4(x)P_0''(x)
  \nonumber \end{eqnarray}

\begin{eqnarray}
Line\;4&& \nonumber \\
P_4(x)&=& 0.403 P_f(x)+0.084 f_2(x) P_f(x) \nonumber \\ &&
+0.597 P_0(x)-0.084 f_2(x) P_0(x)+0.122  h_3(x)f_5(x)^2 u_0(x)^2
\nonumber \\ &&+0.245  h_3(x)f_5(x)f_7(x) u_0(x)v_0(x)+0.122  h_3(x)f_7(x)^2v_0(x)^2
\nonumber \\ &&+0.084 f_3(x) P_f'(x)-0.084 f_3(x) P_0'(x)
 \nonumber \\ &&-0.094  h_3(x)f_5(x) f_6(x)u_0(x) u_0'(x)-0.094  h_3(x)f_5(x)f_8(x)v_0(x)u_0'(x)
 \nonumber \\ &&+0.022  h_3(x)f_6(x)^2u_0'(x)^2-0.094  h_3(x)f_6(x) f_7(x) u_0(x) v_0'(x)
 \nonumber \\ &&-0.094  h_3(x)f_7(x) f_8(x) v_0(x)v_0'(x)+0.045  h_3(x)f_6(x)f_8(x) u_0'(x)v_0'(x)
  \nonumber \\ && +0.022  h_3(x)f_8(x)^2 v_0'(x)^2+0.025 f_4(x) P_f''(x)
  \nonumber \\ &&+0.034 f_4(x)P_0''(x)
  \nonumber \end{eqnarray}
\begin{eqnarray}
Line\;5&& \nonumber \\
P_5(x)&=& 0.503 P_f(x)+0.086 f_2(x) P_f(x) \nonumber \\ &&
+0.497 P_0(x)-0.086 f_2(x) P_0(x)+0.127  h_3(x)f_5(x)^2 u_0(x)^2
\nonumber \\&& +0.255  h_3(x)f_5(x)f_7(x) u_0(x)v_0(x)+0.127  h_3(x)f_7(x)^2v_0(x)^2
\nonumber \\ &&+0.086 f_3(x) P_f'(x)-0.086 f_3(x) P_0'(x)
 \nonumber \\ &&-0.089  h_3(x)f_5(x) f_6(x)u_0(x) u_0'(x)-0.089  h_3(x)f_5(x)f_8(x)v_0(x)u_0'(x)
 \nonumber \\ &&+0.020  h_3(x)f_6(x)^2u_0'(x)^2-0.089  h_3(x)f_6(x) f_7(x) u_0(x) v_0'(x)
 \nonumber \\ &&-0.089  h_3(x)f_7(x) f_8(x) v_0(x)v_0'(x)+0.040  h_3(x)f_6(x)f_8(x) u_0'(x)v_0'(x)
  \nonumber \\ && +0.020  h_3(x)f_8(x)^2 v_0'(x)^2+0.026 f_4(x) P_f''(x)
  \nonumber \\ &&+0.032 f_4(x)P_0''(x)
  \nonumber \end{eqnarray}
\begin{eqnarray}
Line\;6&& \nonumber \\
P_6(x)&=& 0.603 P_f(x)+0.080 f_2(x) P_f(x) \nonumber \\ &&
+0.397 P_0(x)-0.080 f_2(x) P_0(x)+0.122 h_3(x) f_5(x)^2 u_0(x)^2
\nonumber \\&& +0.244  h_3(x)f_5(x)f_7(x) u_0(x)v_0(x)+0.122 h_3(x) f_7(x)^2v_0(x)^2
\nonumber \\ &&+0.080 f_3(x) P_f'(x)-0.080 f_3(x) P_0'(x)
 \nonumber \\ &&-0.079  h_3(x)f_5(x) f_6(x)u_0(x) u_0'(x)-0.079  h_3(x)f_5(x)f_8(x)v_0(x)u_0'(x)
 \nonumber \\ &&+0.017  h_3(x)f_6(x)^2u_0'(x)^2-0.079 h_3(x) f_6(x) f_7(x) u_0(x) v_0'(x)
 \nonumber \\ &&-0.079  h_3(x)f_7(x) f_8(x) v_0(x)v_0'(x)+0.033  h_3(x)f_6(x)f_8(x) u_0'(x)v_0'(x)
  \nonumber \\ && +0.017  h_3(x)f_8(x)^2 v_0'(x)^2+0.026 f_4(x) P_f''(x)
  \nonumber \\ &&+0.028 f_4(x)P_0''(x)
  \nonumber \end{eqnarray}
\begin{eqnarray}
Line\;7&& \nonumber \\
P_7(x)&=& 0.703 P_f(x)+0.069 f_2(x) P_f(x) \nonumber \\ &&
+0.297 P_0(x)-0.069 f_2(x) P_0(x)+0.107 h_3(x) f_5(x)^2 u_0(x)^2
\nonumber \\ &&+0.213  h_3(x)f_5(x)f_7(x) u_0(x)v_0(x)+0.107  h_3(x)f_7(x)^2v_0(x)^2
\nonumber \\ &&+0.069 f_3(x) P_f'(x)-0.069 f_3(x) P_0'(x)
 \nonumber \\ &&-0.063 f_5(x) f_6(x)u_0(x) u_0'(x)-0.063 f_5(x)f_8(x)v_0(x)u_0'(x)
 \nonumber \\ &&+0.013  h_3(x)f_6(x)^2u_0'(x)^2-0.063  h_3(x)f_6(x) f_7(x) u_0(x) v_0'(x)
 \nonumber \\ &&-0.063  h_3(x)f_7(x) f_8(x) v_0(x)v_0'(x)+0.025  h_3(x)f_6(x)f_8(x) u_0'(x)v_0'(x)
  \nonumber \\ && +0.013  h_3(x)f_8(x)^2 v_0'(x)^2+0.023 f_4(x) P_f''(x)
  \nonumber \\ &&+0.022 f_4(x)P_0''(x)
  \nonumber \end{eqnarray}

  \begin{eqnarray}
Line\;8&& \nonumber \\
P_8(x)&=& 0.802 P_f(x)+0.051 f_2(x) P_f(x) \nonumber \\ &&
+0.198 P_0(x)-0.051 f_2(x) P_0(x)+0.081  h_3(x)f_5(x)^2 u_0(x)^2
\nonumber \\ &&+0.162 h_3(x) f_5(x)f_7(x) u_0(x)v_0(x)+0.081  h_3(x)f_7(x)^2v_0(x)^2
\nonumber \\ &&+0.051 f_3(x) P_f'(x)-0.051 f_3(x) P_0'(x)
 \nonumber \\ &&-0.043  h_3(x)f_5(x) f_6(x)u_0(x) u_0'(x)-0.043 h_3(x) f_5(x)f_8(x)v_0(x)u_0'(x)
 \nonumber \\ &&+0.009  h_3(x)f_6(x)^2u_0'(x)^2-0.043  h_3(x)f_6(x) f_7(x) u_0(x) v_0'(x)
 \nonumber \\ &&-0.043  h_3(x)f_7(x) f_8(x) v_0(x)v_0'(x)+0.017  h_3(x)f_6(x)f_8(x) u_0'(x)v_0'(x)
  \nonumber \\ && +0.009  h_3(x)f_8(x)^2 v_0'(x)^2+0.018 f_4(x) P_f''(x)
  \nonumber \\ &&+0.015 f_4(x)P_0''(x)
  \nonumber \end{eqnarray}
  \begin{eqnarray}
Line\;9&& \nonumber \\
P_9(x)&=& 0.901 P_f(x)+0.028 f_2(x) P_f(x) \nonumber \\ &&
+0.099 P_0(x)-0.028 f_2(x) P_0(x)+0.045  h_3(x)f_5(x)^2 u_0(x)^2
\nonumber \\ && +0.191  h_3(x)f_5(x)f_7(x) u_0(x)v_0(x)+0.045  h_3(x)f_7(x)^2v_0(x)^2
\nonumber \\ &&+0.028 f_3(x) P_f'(x)-0.028 f_3(x) P_0'(x)
 \nonumber \\ &&-0.022  h_3(x)f_5(x) f_6(x)u_0(x) u_0'(x)-0.022  h_3(x)f_5(x)f_8(x)v_0(x)u_0'(x)
 \nonumber \\ &&+0.004 h_3(x) f_6(x)^2u_0'(x)^2-0.022  h_3(x)f_6(x) f_7(x) u_0(x) v_0'(x)
 \nonumber \\ &&-0.022  h_3(x)f_7(x) f_8(x) v_0(x)v_0'(x)+0.009 h_3(x) f_6(x)f_8(x) u_0'(x)v_0'(x)
  \nonumber \\ && +0.004  h_3(x)f_8(x)^2 v_0'(x)^2+0.010 f_4(x) P_f''(x)
  \nonumber \\ &&+0.008 f_4(x)P_0''(x)
  \nonumber \end{eqnarray}

\subsection{ Numerical examples through the generalized method of lines}
 We consider some examples in which  $$\Omega=\{(r,\theta)\;|\;   1\leq r\leq 2,\; 0\leq \theta \leq 2\pi\},$$
$$\partial \Omega_0=\{(1,\theta)\;|\;0\leq \theta \leq 2\pi\},$$ and
$$\partial \Omega_1=\{(2,\theta)\;|\;0\leq \theta \leq 2\pi\}.$$
For such cases, the boundary conditions are

$$u=u_0(\theta), \;\; v=v_0(\theta),\; \text{ on } \partial \Omega_0,$$
$$u=v=0, \text{ on } \partial \Omega_1,$$
so that in these examples we do not have boundary conditions for the pressure.

Through the generalized method of lines, neglecting gravity effects and truncating the series up to the terms in $d^2$,
 where $d=1/N$ is
the mesh thickness concerning the discretization in $r$, we present numerical results for the following
 approximation of the Navier-Stokes system,
 \begin{gather}\label{912} \left\{
\begin{array}{ll}
 \nu \nabla^2u-u \partial_x u-v \partial_y u- \partial_x P =0, & \text{ in }  \Omega,
 \\ \\
 \nu \nabla^2 v-u \partial_x v-v \partial_y v- \partial_y P=0, & \text{ in } \Omega,
\\ \\
\varepsilon \nabla^2P + \partial_x u+ \partial_y v=0, & \text{ in } \Omega,
\end{array} \right.\end{gather}
where $\varepsilon>0$ is a very small parameter. We highlight, since $\varepsilon>0$ must be very small, the results obtained
through the generalized of lines, as indicated in the last section, may have a relevant error. Anyway, we may use such a procedure
to obtain the general line expressions, which we expect to be analytically suitable to obtain a numerical result by calculating
numerically the concerning coefficients for such lines, through a minimization of the $L^2$ norm of equation errors, in
a finite differences context.

Thus, from such a method, the general expression for the velocity and pressure fields on the line $n$, are given by
(here $x$ stands for $\theta$ and $P_0$ must be calculated numerically in the optimization process):
\begin{eqnarray}
u_n(x)&=& a_1[n]\cos(x)+a_2[n] u_0(x)+a_3[n]\cos[x]u_0(x)^2 \nonumber \\ &&
a_4[n]\sin(x)u_0(x)v_0(x)+a_5[n]\sin(x)u_0(x)u_0'(x)+a_6[n]\cos(x) v_0(x)u_0'(x)
\nonumber \\ && a_7[n]u_0''(x)+a_8[n] \sin(x)+a_9[n]\sin(x)P_0(x) \nonumber \\ &&
a_{10}[n]P_0(x)\cos(x)+a_{11}[n],
\end{eqnarray}
\begin{eqnarray}
v_n(x)&=& b_1[n]\sin(x)+b_2[n] v_0(x)+b_3[n]\sin[x]v_0(x)^2 \nonumber \\ &&
b_4[n]\cos(x)u_0(x)v_0(x)+b_5[n]\sin(x)v_0'(x)u_0(x)+b_6[n]\cos(x) v_0(x)u_0'(x)
\nonumber \\ && b_7[n]v_0''(x)+b_8[n] \cos(x)+b_9[n]\sin(x)P_0(x) \nonumber \\ &&
b_{10}[n]P_0(x)\cos(x)+b_{11}[n],
\end{eqnarray}
\begin{eqnarray}
P_n(x)&=& c_1[n]+c_2[n] \cos(x)u_0(x)+c_3[n] \sin(x) v_0(x)
 +c_4[n]\sin(x) u_0'(x) \nonumber \\ &&+c_5[n]\cos(x)v_0'(x)+c_6[n]u_0''(x)+c_7[n]v_0''(x)
\nonumber \\ &&+c_9[n]P_0(x)+c_{10}[n] P_0''(x).
\end{eqnarray}


For the first example, $u_0(x)=-1.5 \sin(x)$ and $v_0(x)=1.5 \cos(x).$
Denoting \begin{eqnarray}J(u,v,P)&=&\int_\Omega \left(\nu \nabla^2 u-u \partial_x u-v \partial_y u- \partial_x P\right)^2\;d\Omega \nonumber \\
&&+\int_\Omega \left(\nu \nabla^2 v-u \partial_x v-v \partial_y v- \partial_y P\right)^2\;d\Omega \nonumber \\
&&+\int_\Omega \left(\partial_x u+ \partial_y v\right)^2\;d\Omega,
\end{eqnarray} as, above mentioned, the coefficients $\{a_i[n]\}, \{b_i[n]\}, \{c_i[n]\}$ have been obtained through the numerical minimization of $J(\{u_n\},\{v_n\},\{P_n\})$, so that
 for the mesh in question, we have obtained
$$ \text{ For this first example: } J(\{u_n\},\{v_n\},\{P_n\}) \approx 9.23 \; 10^{-12} \text{ for } \nu=0.1,$$
We have plotted the fields  $u$, $v$ and $P$, for the lines $n=1,\;n=5,\;n=10,\;n=15\text{ and } n=19$, for a mesh $20 \times 150$ corresponding to $20$ lines.
Please, see the figures from \ref{figure4.1} to \ref{figure4.6} for the case $\nu=0.1$.
For all graphs, please consider units in $x$ to be multiplied by $2 \pi/150.$
\begin{figure}[!htb]
\begin{center}
	\includegraphics[height=4.0cm]{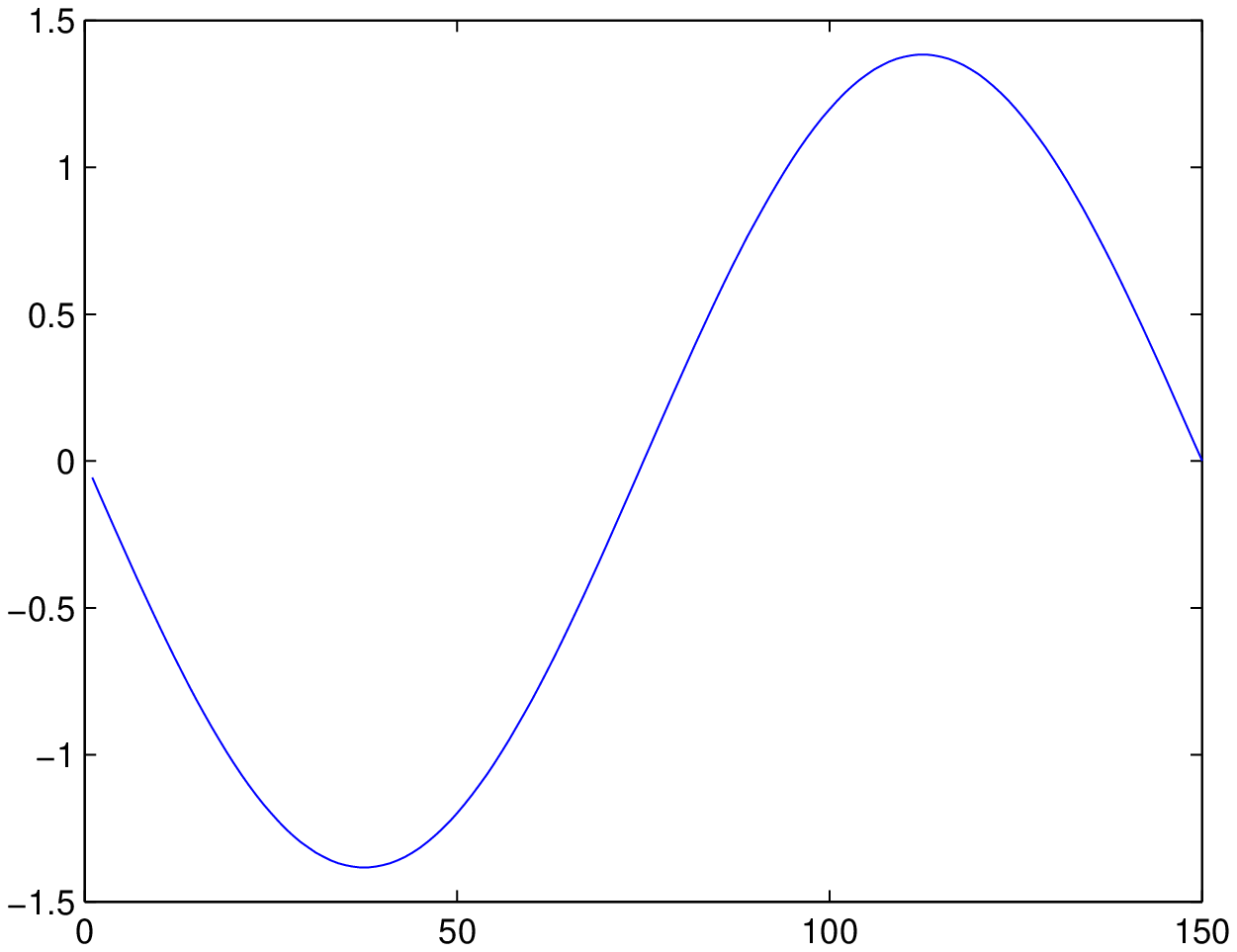}
    \includegraphics[height=4.0cm]{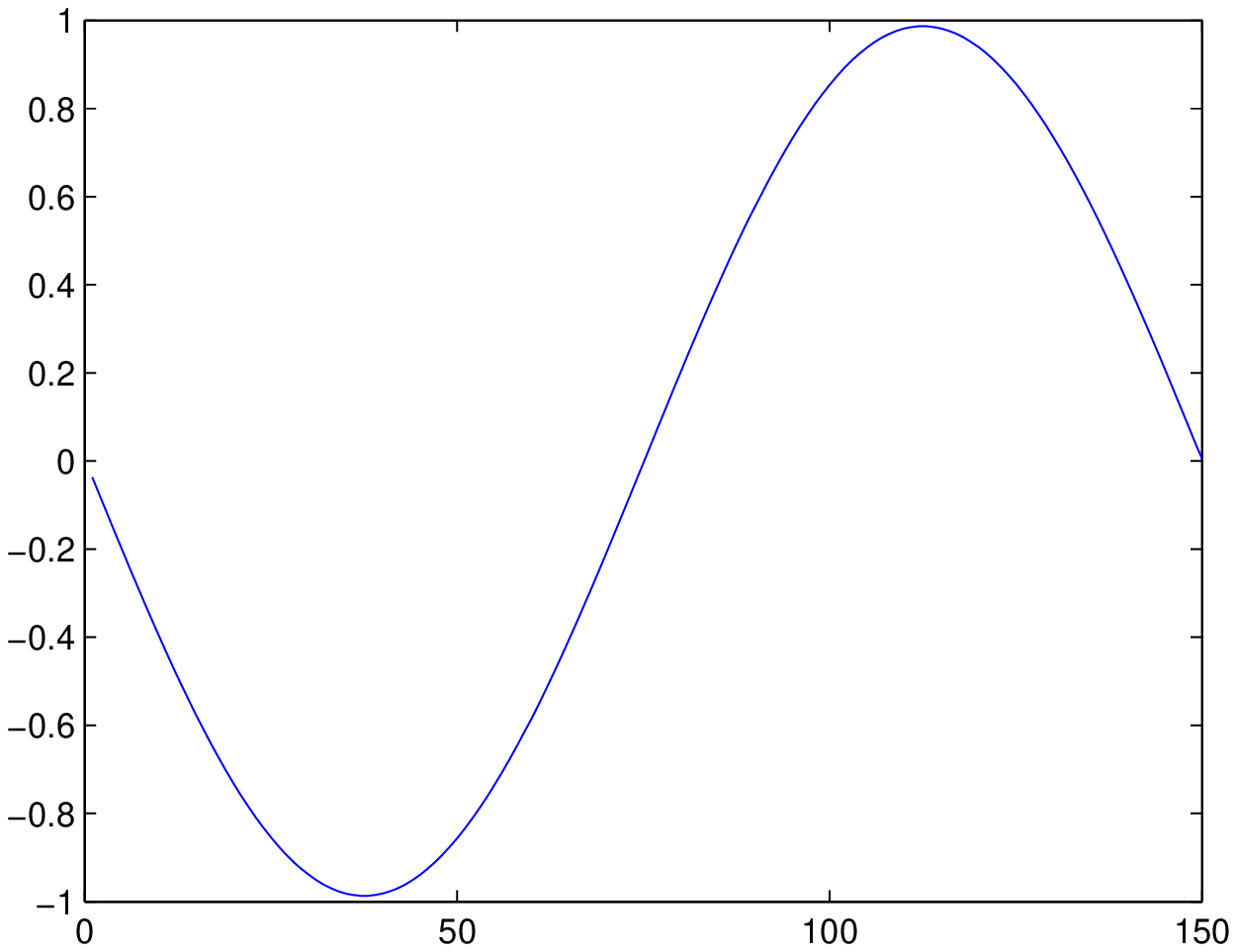}
    \includegraphics[height=4.0cm]{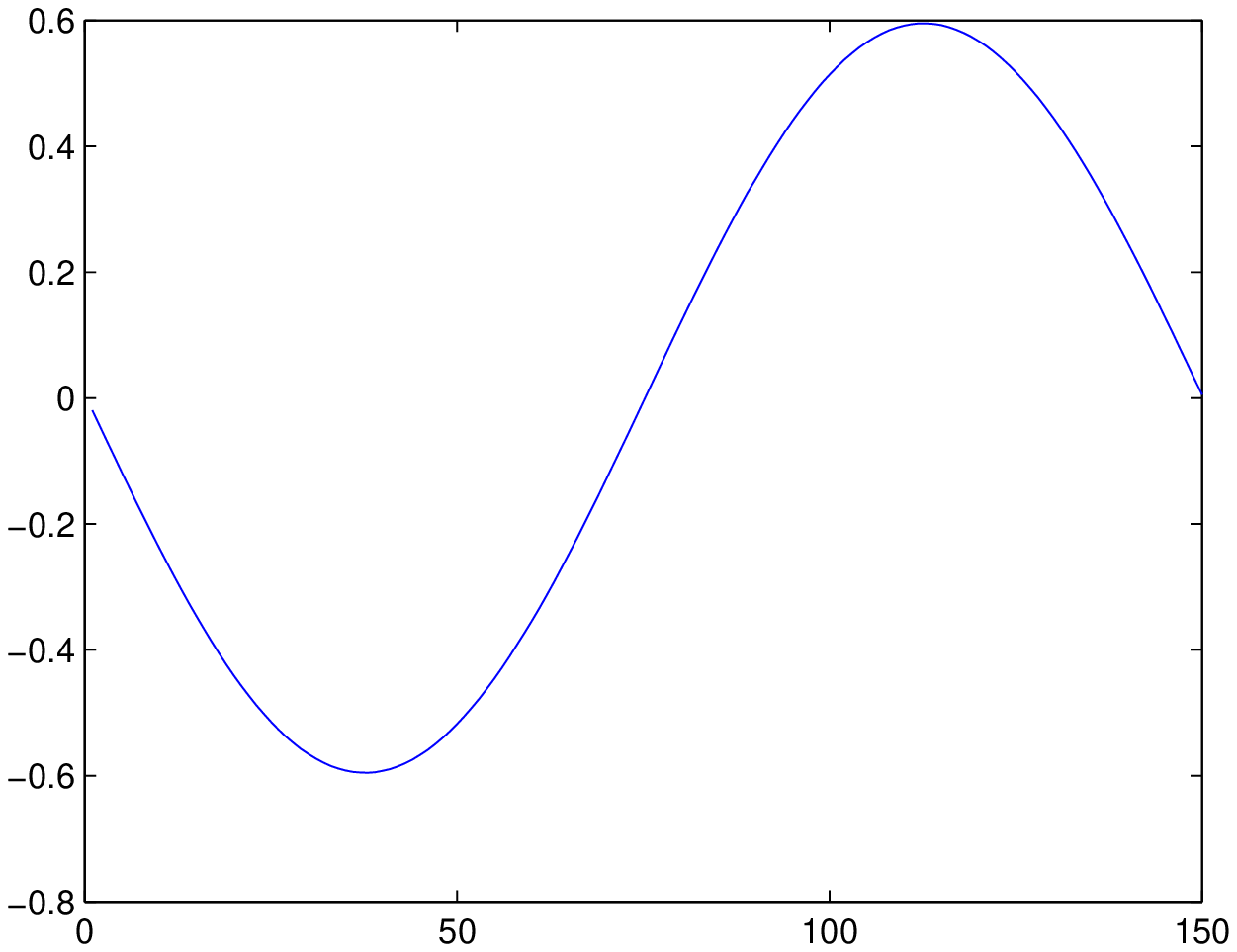}
    \vspace{-0.3cm}
\caption{\small{First example, from the left to the right, fields of velocity $u_1(x),\;u_5(x),\; u_{10}(x)$ for the lines $n=1,\;n=5$ and $n=10$.}} \label{figure4.1}
\end{center}
\end{figure}
\begin{figure}[!htb]
\begin{center}
	\includegraphics[height=4.0cm]{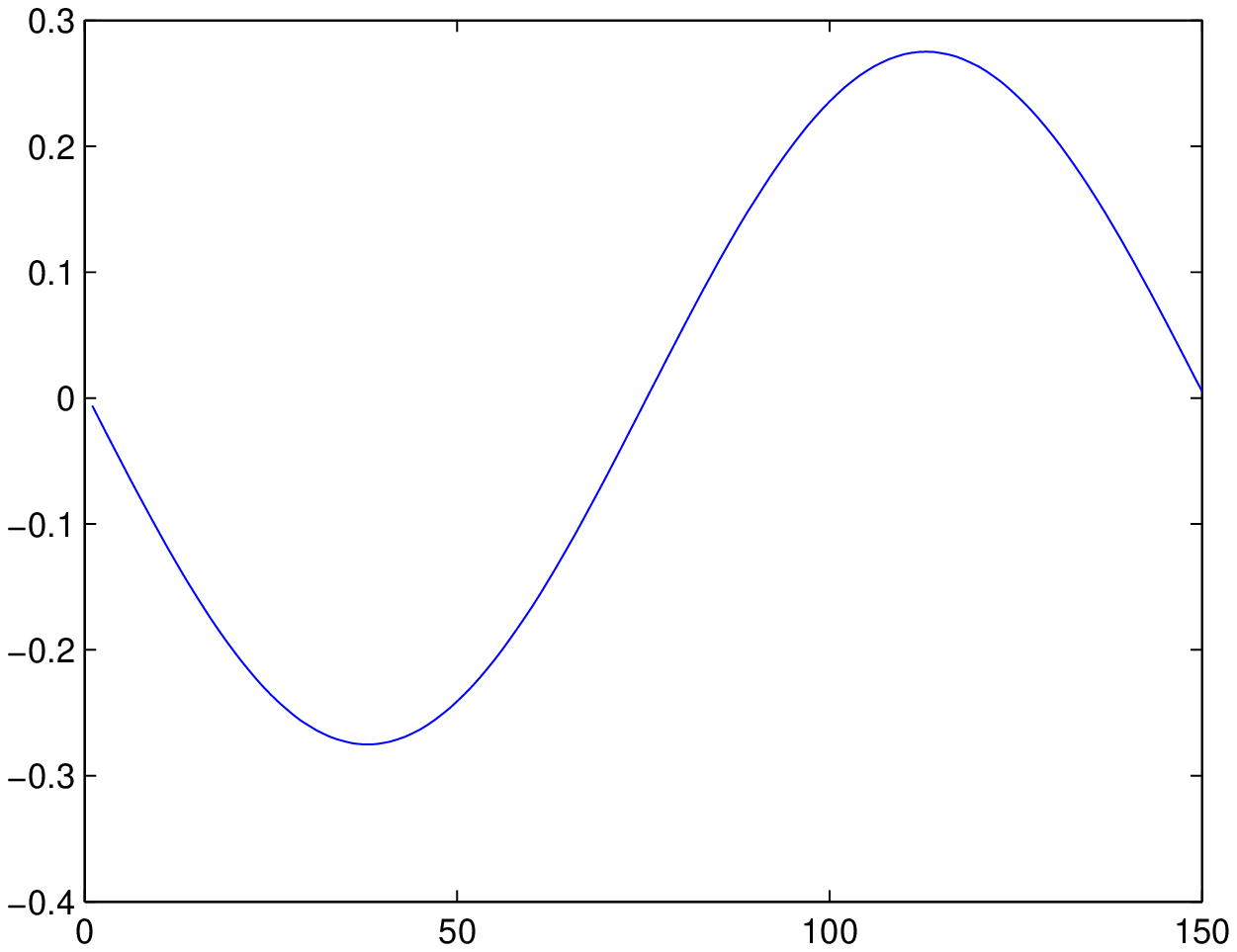}
    \includegraphics[height=4.0cm]{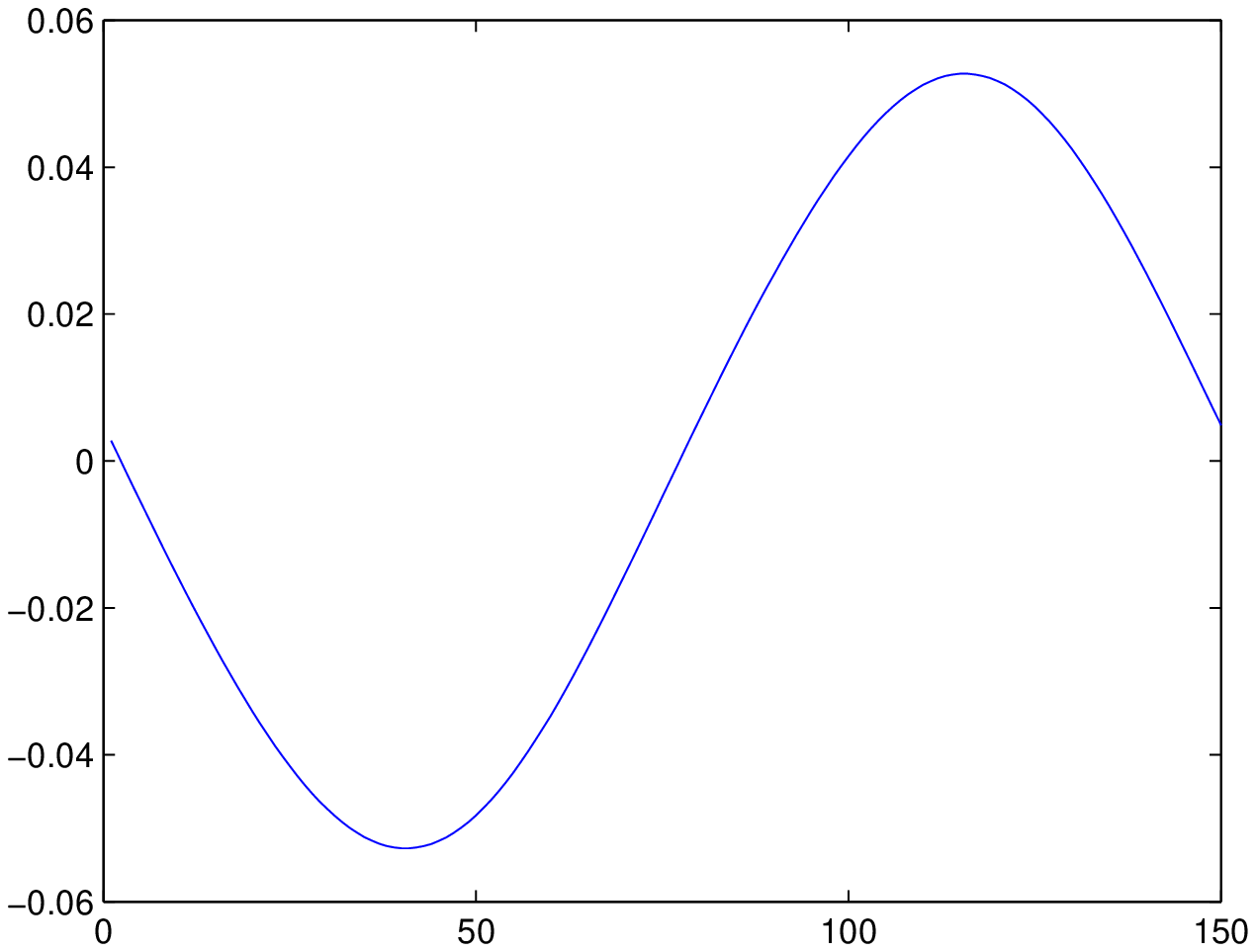}
    \vspace{-0.3cm}
\caption{\small{First example, from the left to the right, fields of velocity $u_{15}(x),\;u_{19}(x)$ for the lines $n=15$, and $n=19$.}} \label{figure4.2}
\end{center}
\end{figure}
\begin{figure}[!htb]
\begin{center}
	\includegraphics[height=4.0cm]{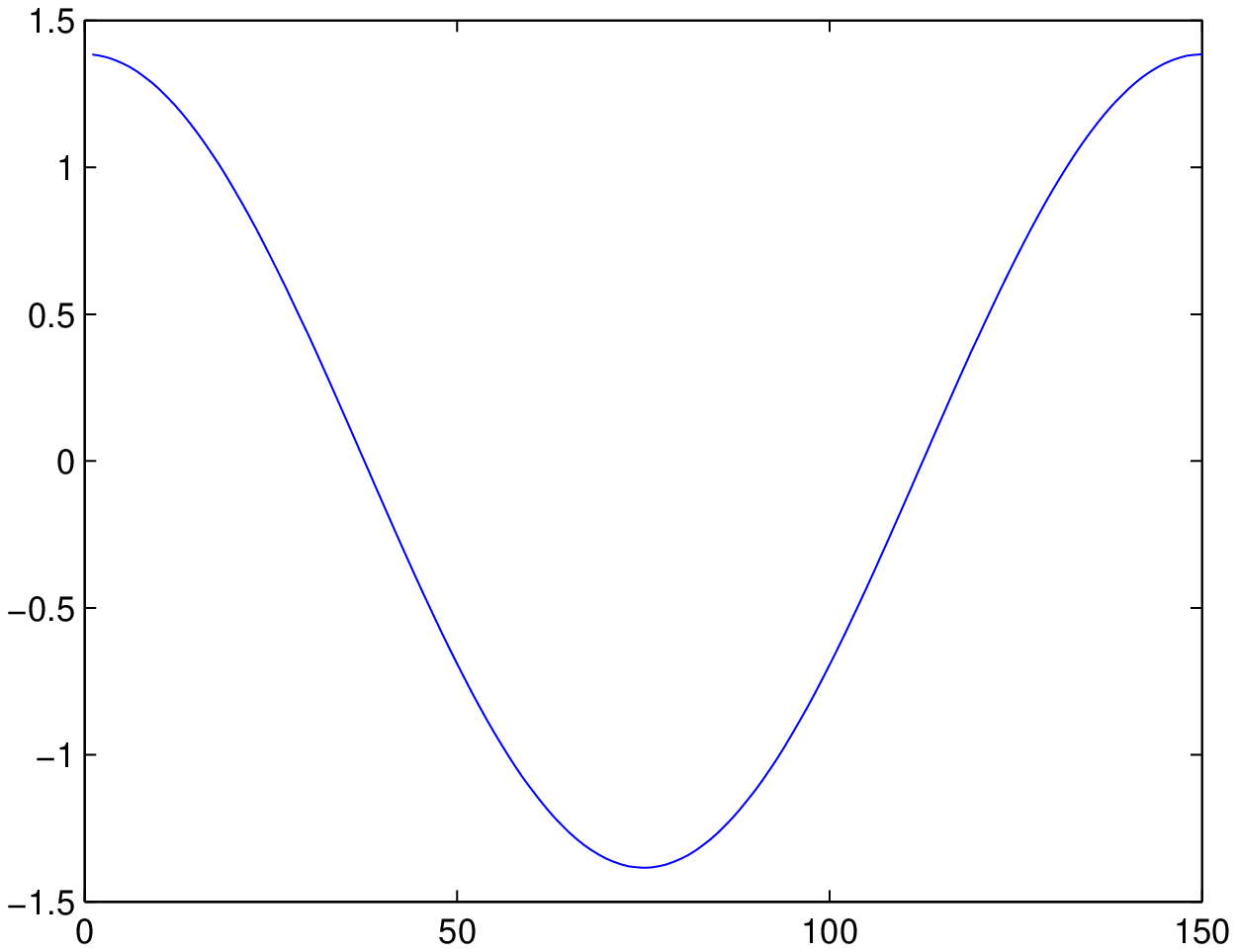}
    \includegraphics[height=4.0cm]{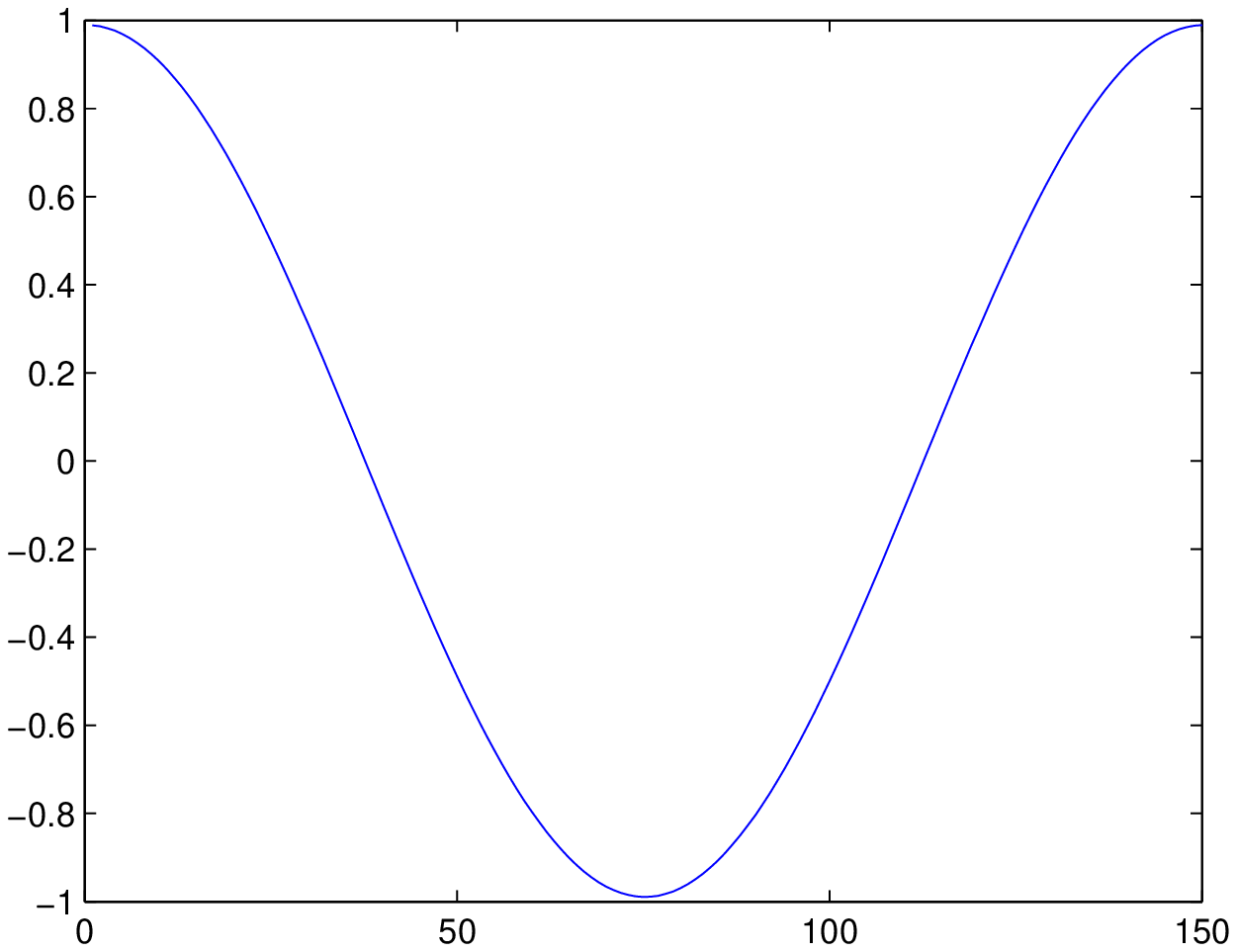}
    \includegraphics[height=4.0cm]{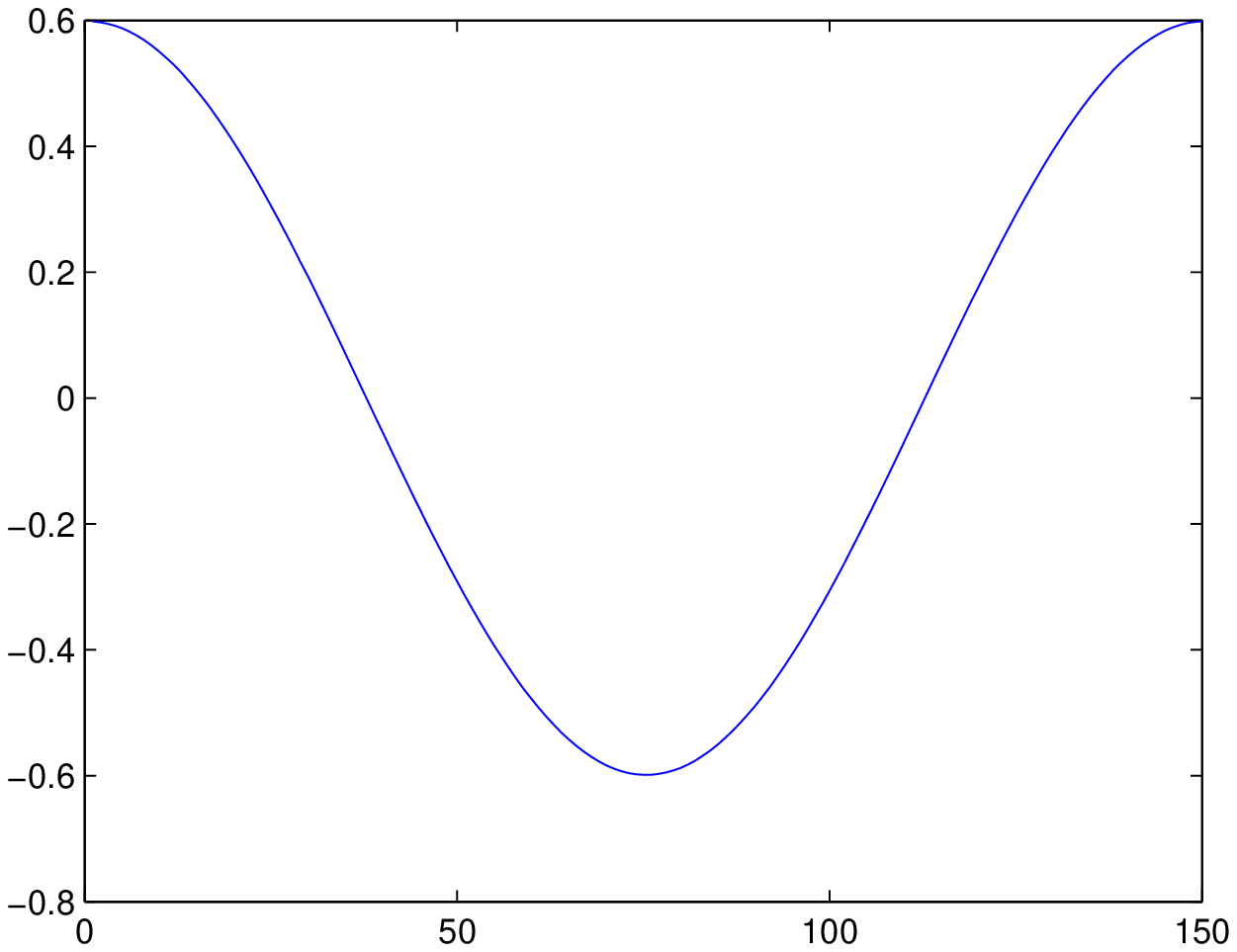}
    \vspace{-0.3cm}
\caption{\small{First example, from the left to the right, fields of velocity $v_1(x),\;v_5(x),\; v_{10}(x)$ for the lines $n=1,\;n=5$ and $n=10$.}} \label{figure4.3}
\end{center}
\end{figure}
\begin{figure}[!htb]
\begin{center}
	\includegraphics[height=4.0cm]{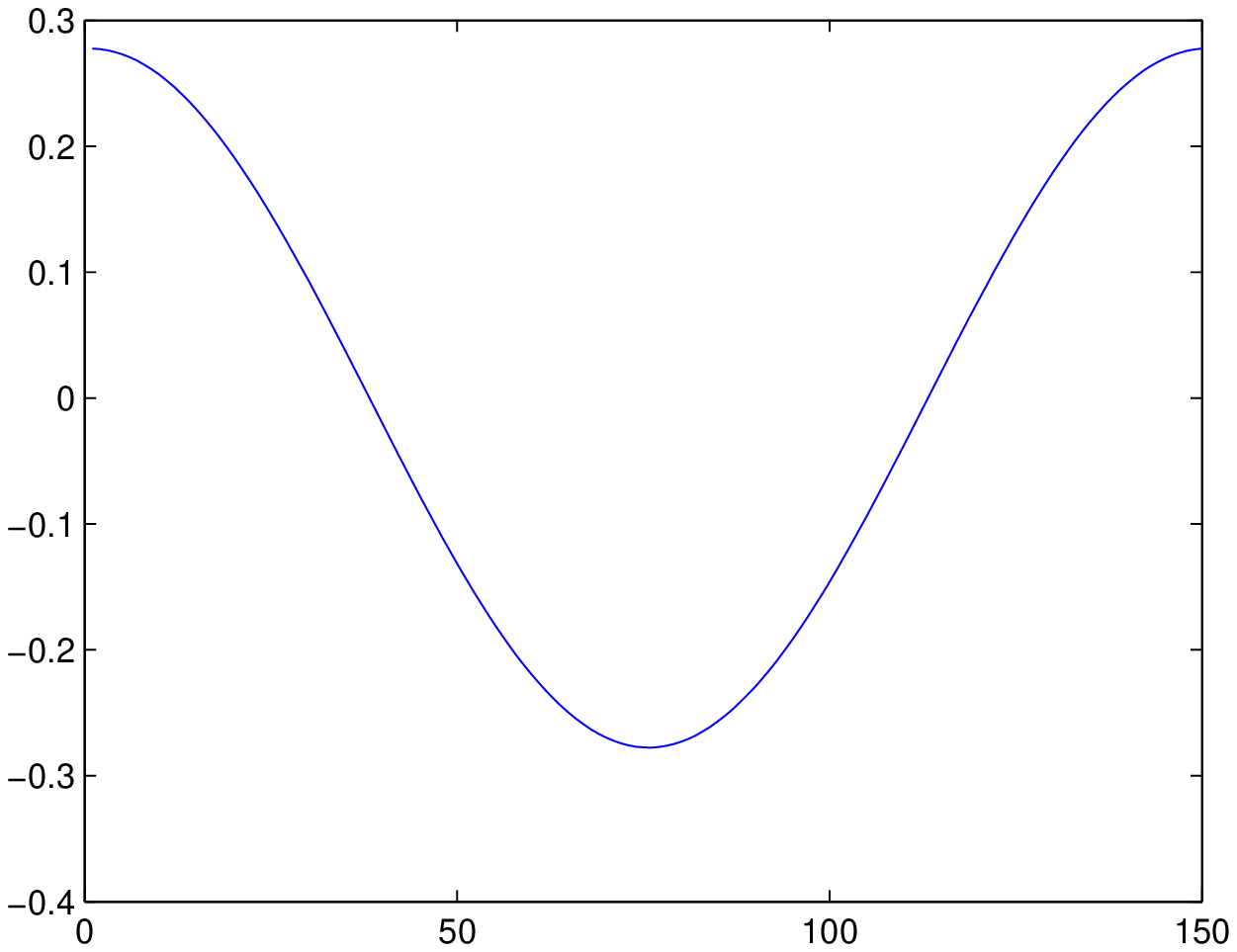}
    \includegraphics[height=4.0cm]{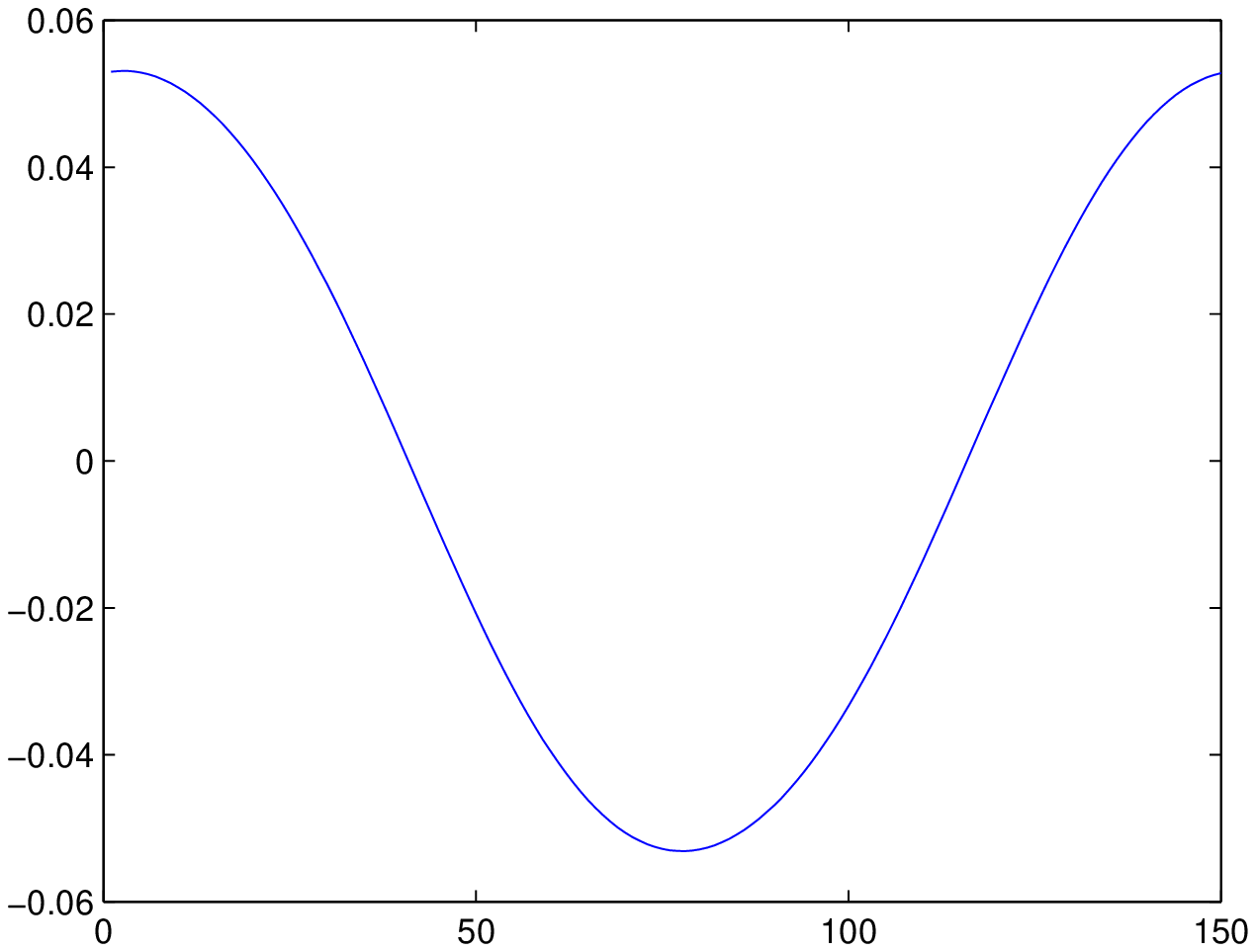}
    \vspace{-0.3cm}
\caption{\small{First example, from the left to the right, fields of velocity $v_{15}(x),\;v_{19}(x)$ for the lines $n=15$, and $n=19$.}} \label{figure4.4}
\end{center}
\end{figure}
\begin{figure}[!htb]
\begin{center}
	\includegraphics[height=4.0cm]{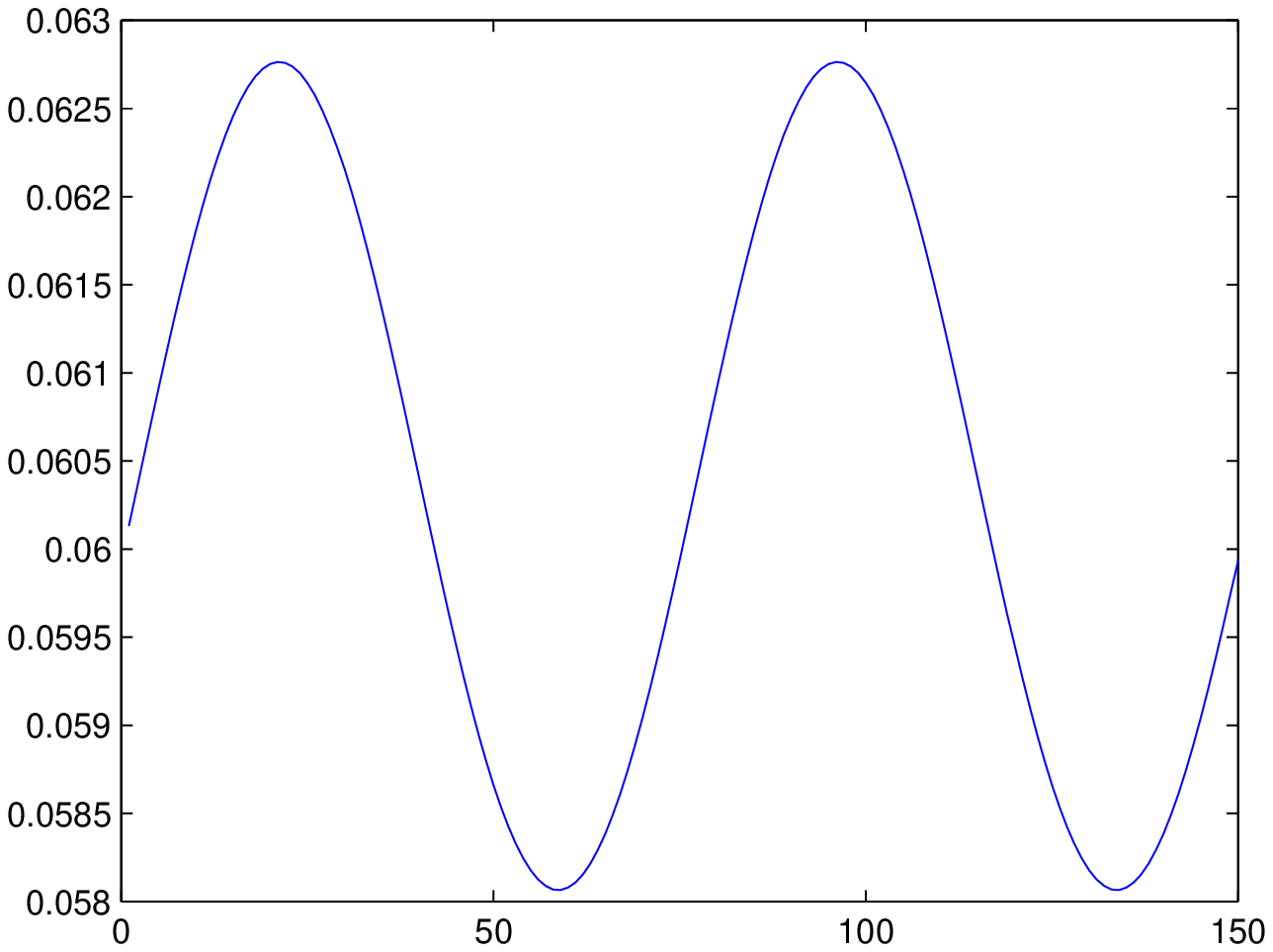}
    \includegraphics[height=4.0cm]{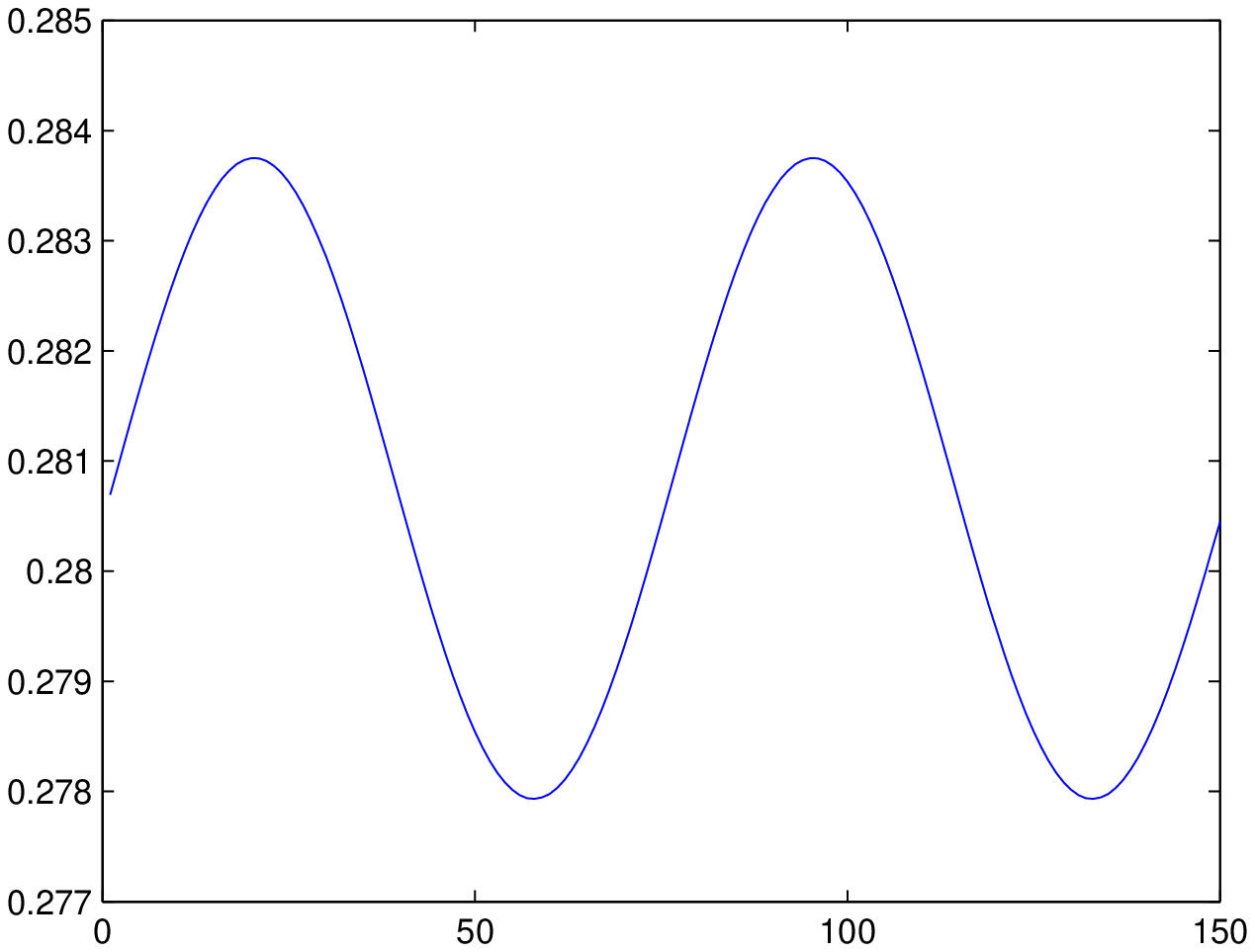}
    \includegraphics[height=4.0cm]{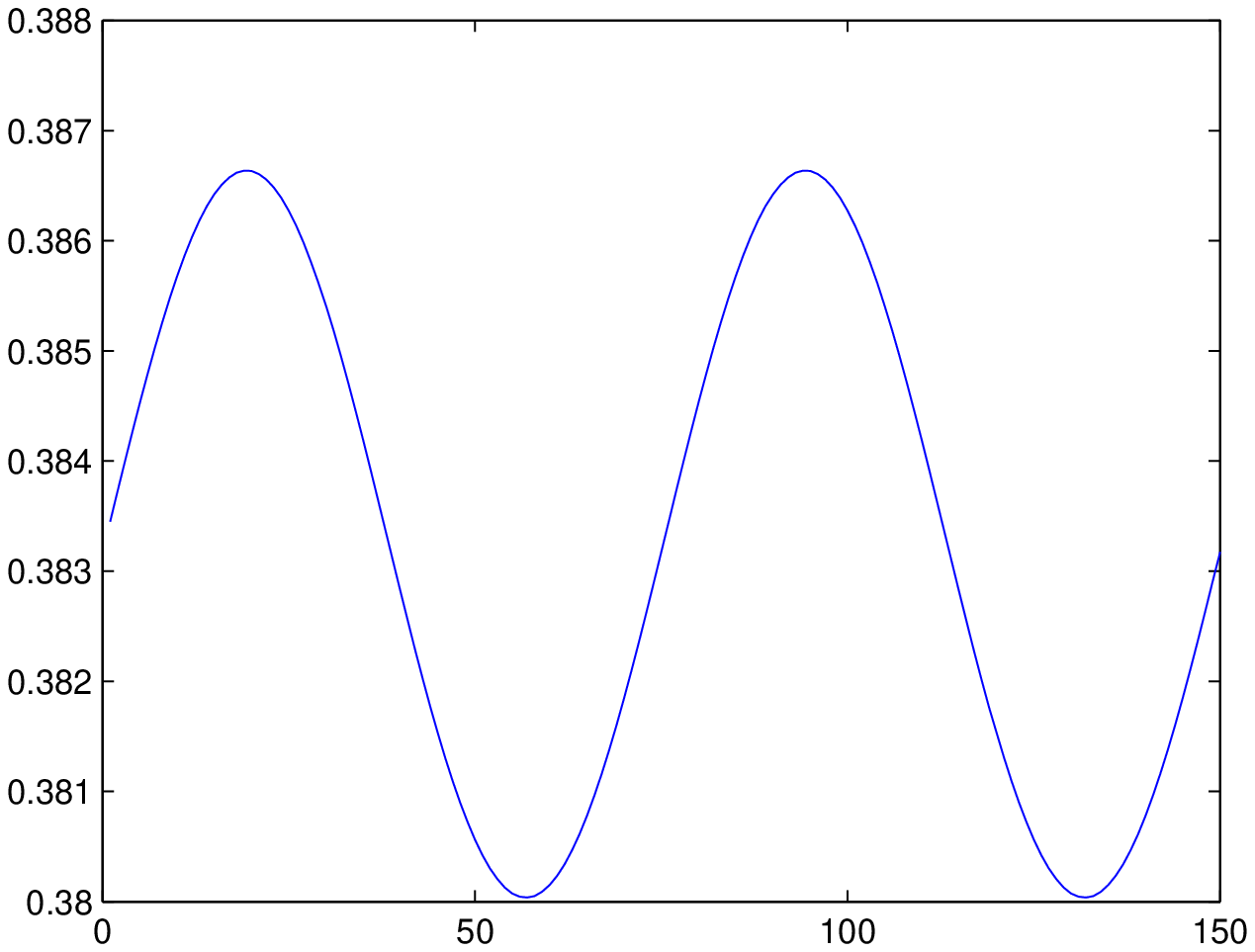}
    \vspace{-0.3cm}
\caption{\small{First example, from the left to the right, fields of pressure $P_1(x),\;P_5(x),\; P_{10}(x)$ for the lines $n=1,\;n=5$ and $n=10$.}} \label{figure4.5}
\end{center}
\end{figure}
\begin{figure}[!htb]
\begin{center}
	\includegraphics[height=4.0cm]{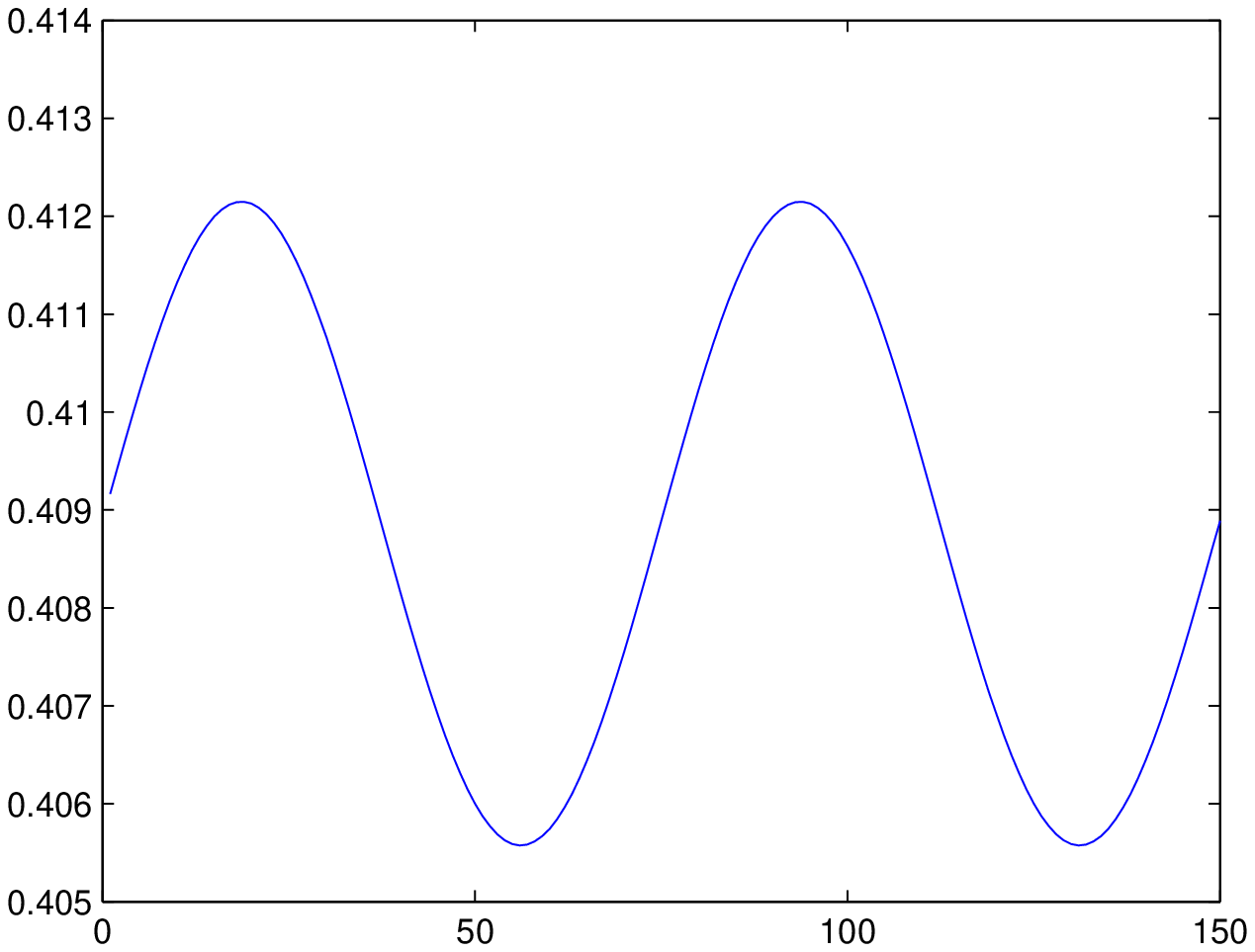}
    \includegraphics[height=4.0cm]{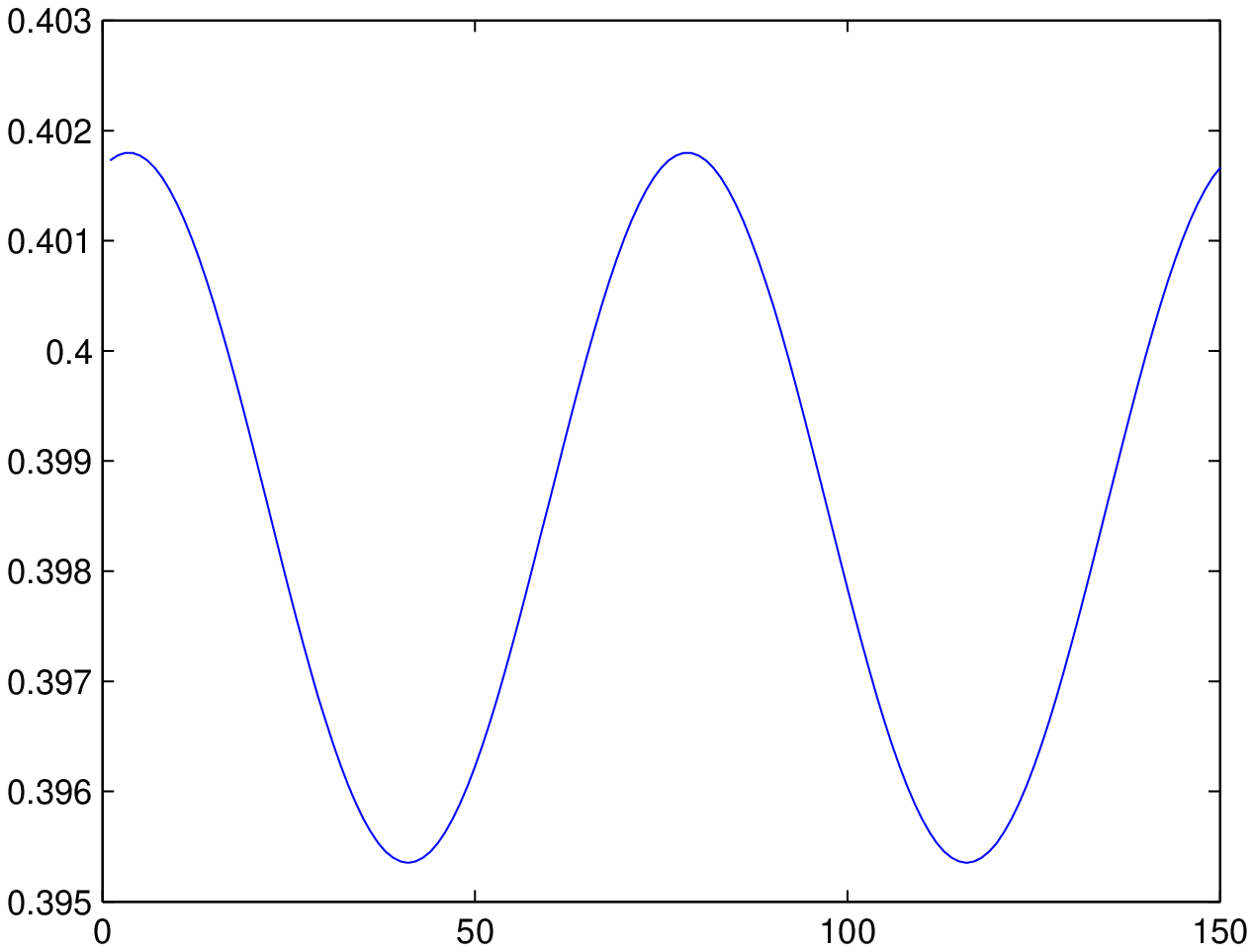}
    \vspace{-0.3cm}
\caption{\small{First example, from the left to the right, fields of pressure $P_{15}(x),\;P_{19}(x)$ for the lines $n=15$, and $n=19$.}} \label{figure4.6}
\end{center}
\end{figure}

For the second example, we consider $u_0(x)=-3.0\;\cos(x)\sin(x)$, $v_0(x)=-(-2.0 \cos(x)^2+\sin(x)^2)$ and $\nu=1.0$.


Again the coefficients $\{a_i[n]\}, \{b_i[n]\}, \{c_i[n]\}$ have been obtained through the numerical minimization of $J(\{u_n\},\{v_n\},\{P_n\})$, so that
 for the mesh in question, we have obtained
$$ \text{ For this second example: } J(\{u_n\},\{v_n\},\{P_n\}) \approx 6.0* 10^{-7} \text{ for } \nu=1.0.$$
In any case, considering the values obtained for $J$, it seems we have got good  first approximations for the concerning solutions.

For the second example, for the field of velocities $u$ and $v$, and pressure field $P$, for the lines $n=1$, $n=5$, $n=10$, $n=15$ and $n=19$, please see figures \ref{figure4A.1} to \ref{figure4A.6}.
Once more, for all graphs, please consider units in $x$ to be multiplied by $2 \pi/150.$
\begin{figure}[!htb]
\begin{center}
	\includegraphics[height=4.0cm]{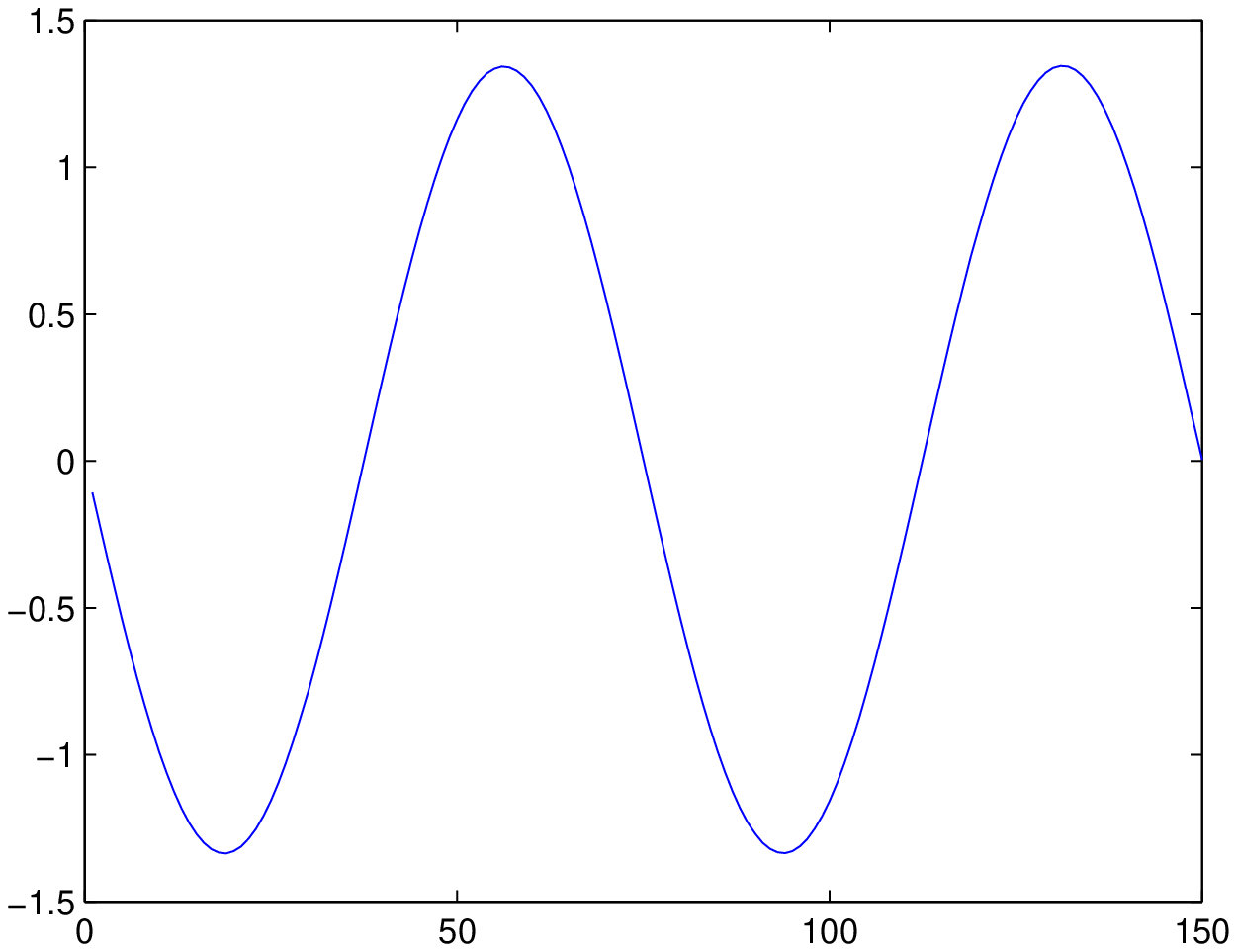}
    \includegraphics[height=4.0cm]{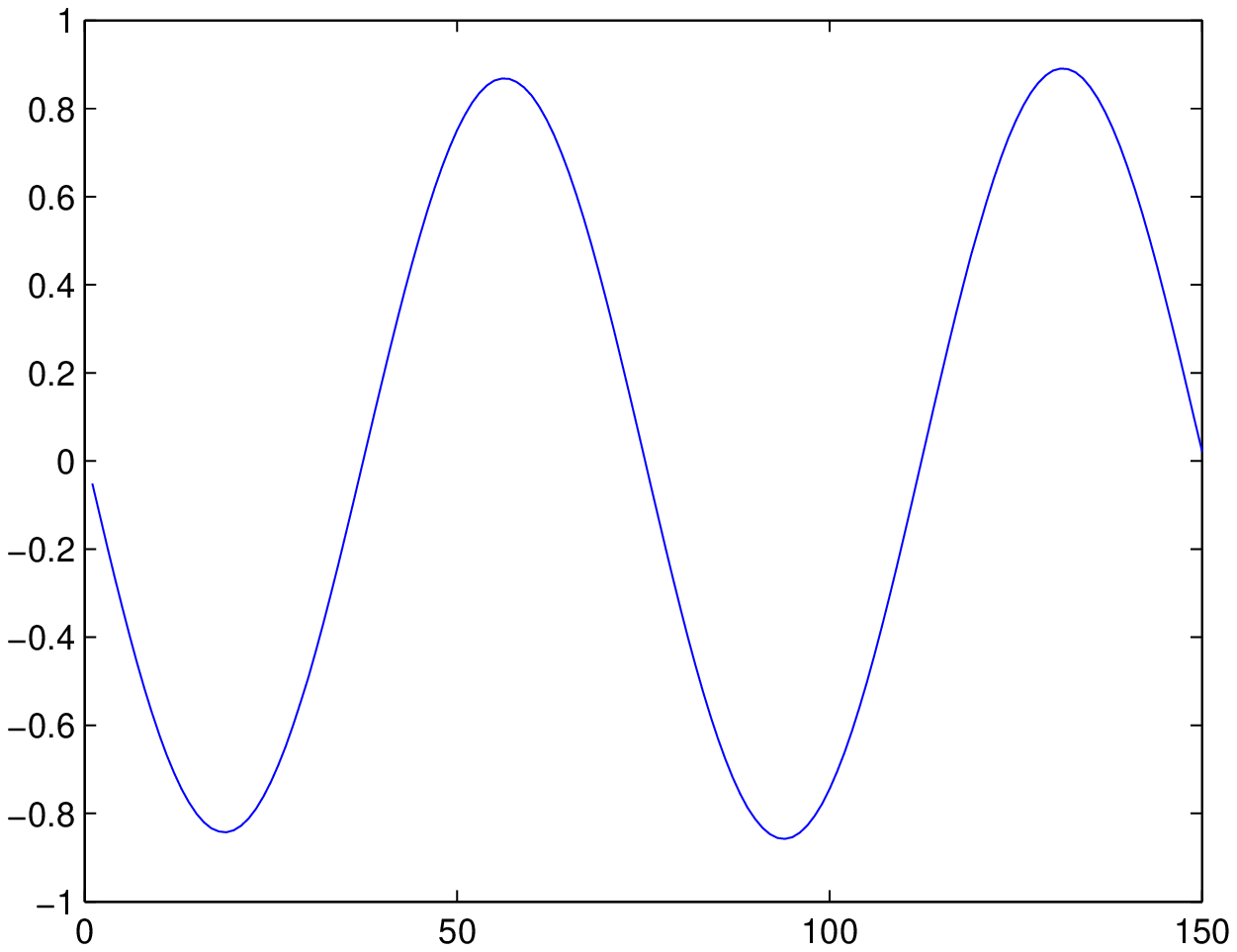}
    \includegraphics[height=4.0cm]{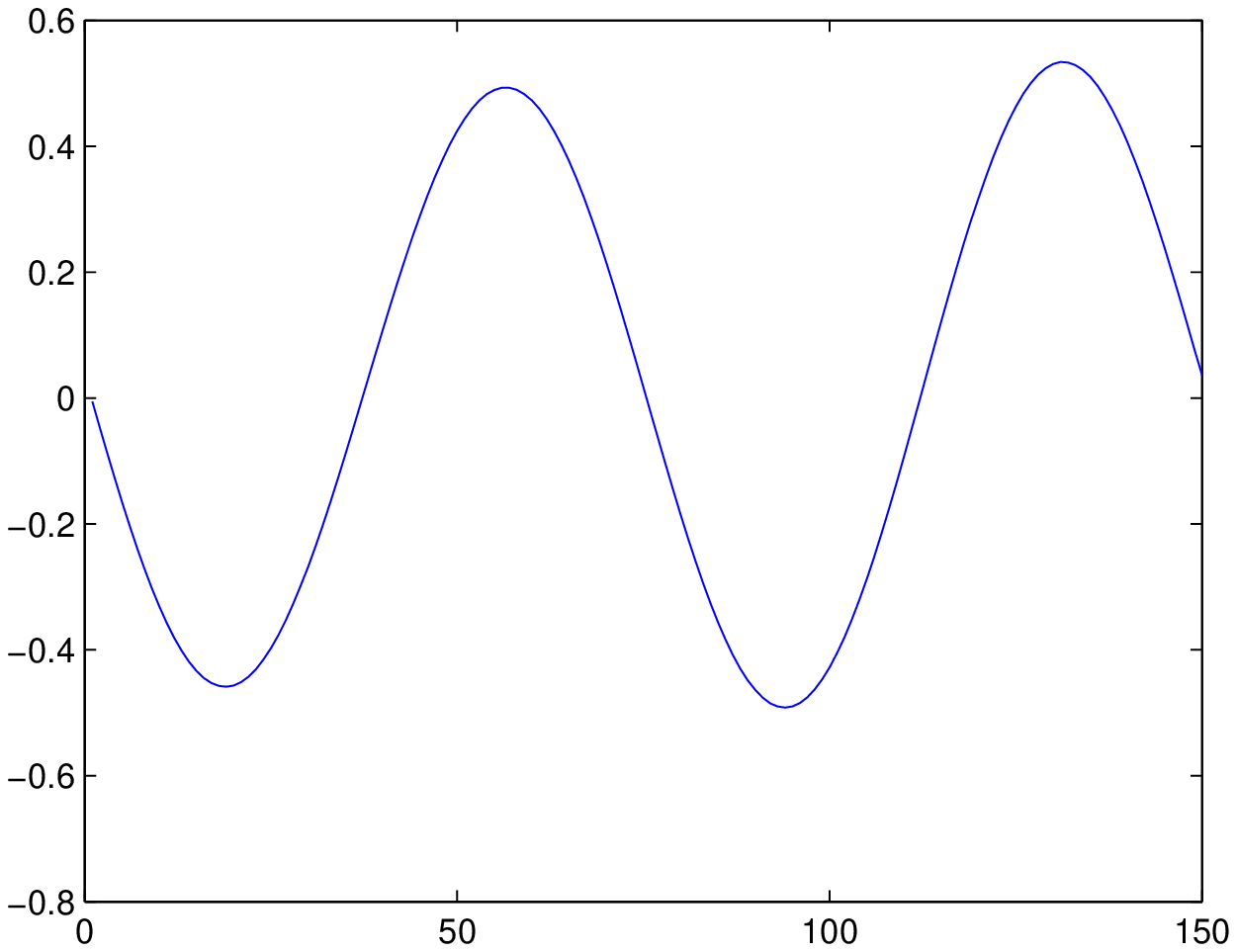}
    \vspace{-0.3cm}
\caption{\small{Second example, from the left to the right, fields of velocity $u_1(x),\;u_5(x),\; u_{10}(x)$ for the lines $n=1,\;n=5$ and $n=10$.}} \label{figure4A.1}
\end{center}
\end{figure}
\begin{figure}[!htb]
\begin{center}
	\includegraphics[height=4.0cm]{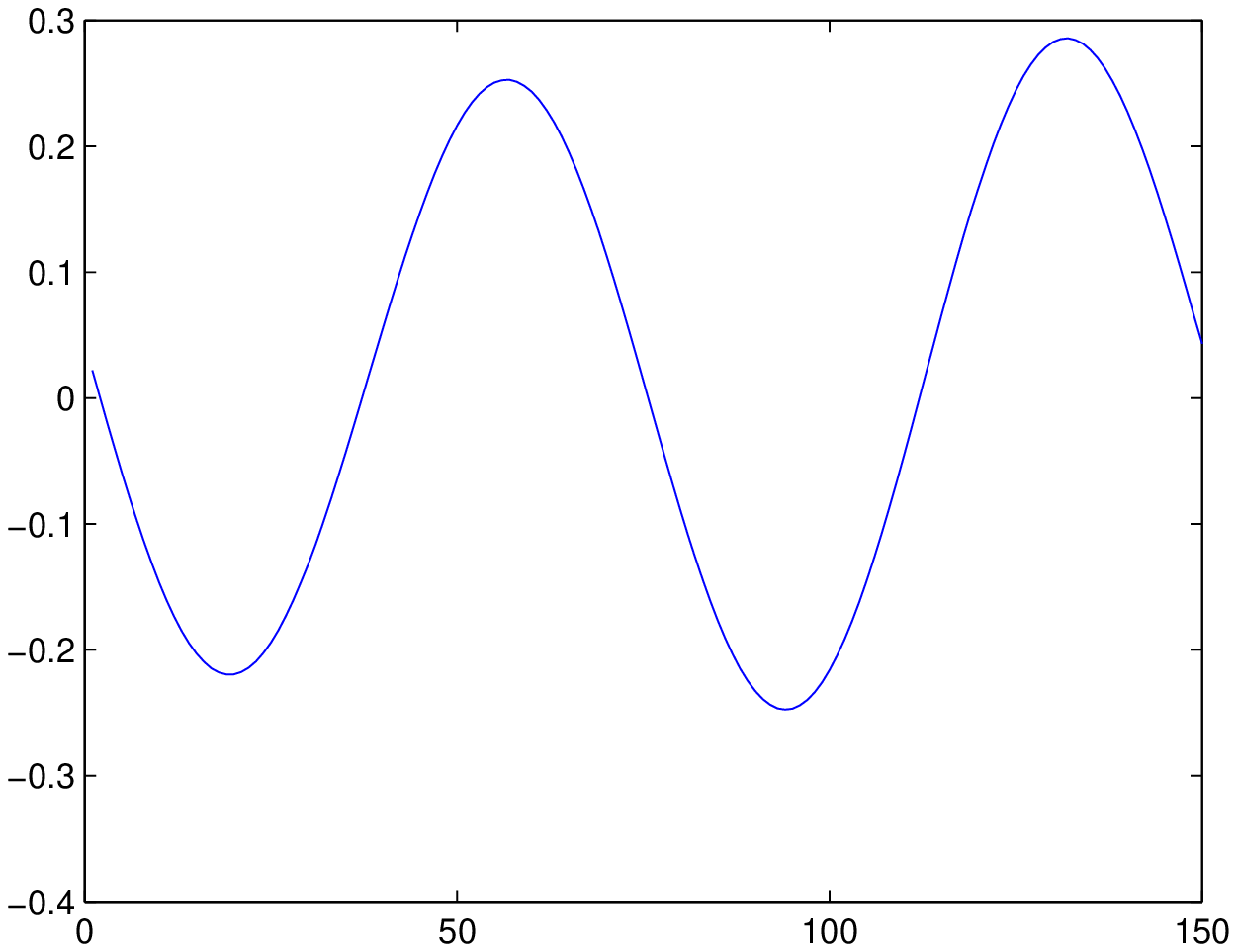}
    \includegraphics[height=4.0cm]{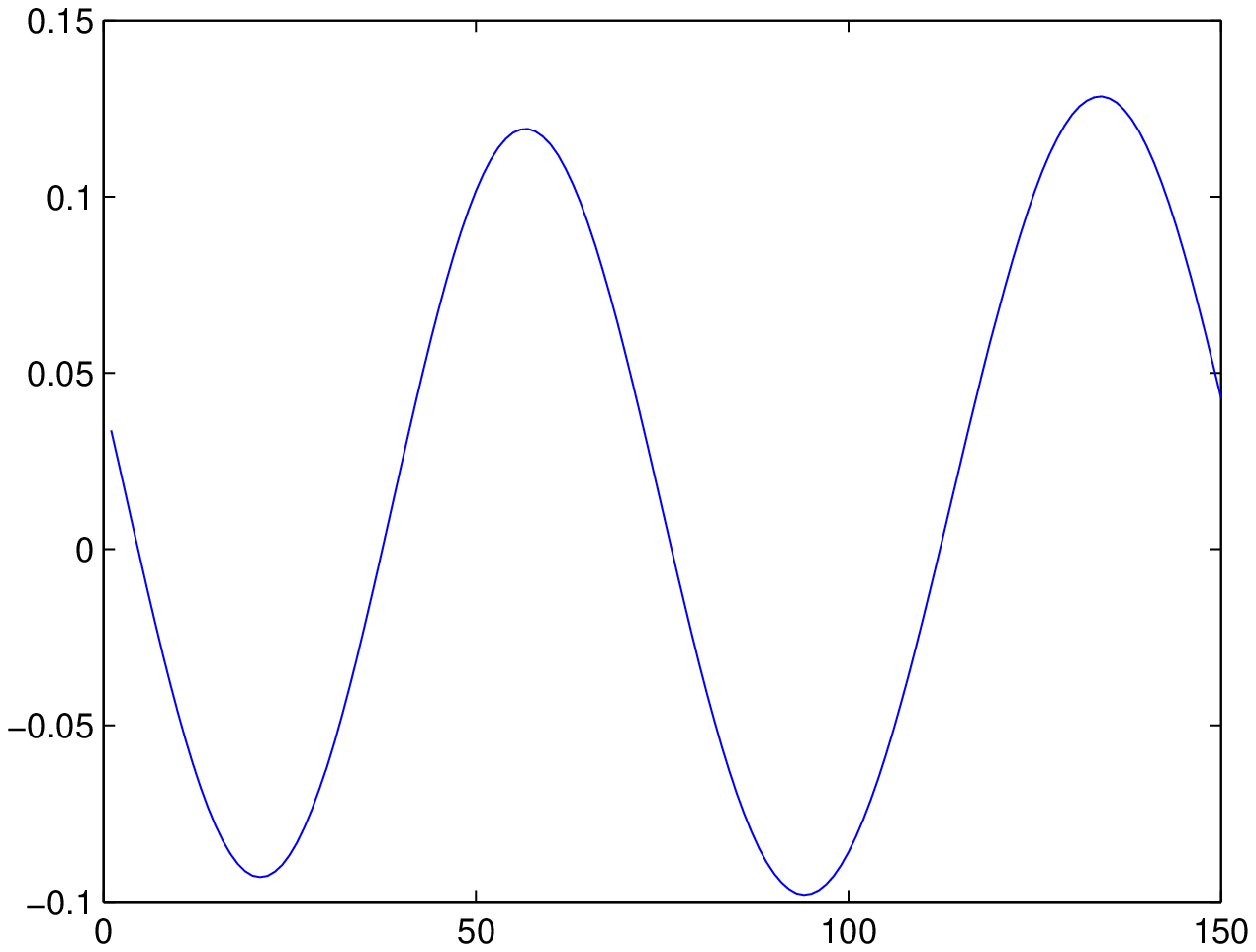}
    \vspace{-0.3cm}
\caption{\small{Second example, from the left to the right, fields of velocity $u_{15}(x),\;u_{19}(x)$ for the lines $n=15$, and $n=19$.}} \label{figure4A.2}
\end{center}
\end{figure}
\begin{figure}[!htb]
\begin{center}
	\includegraphics[height=4.0cm]{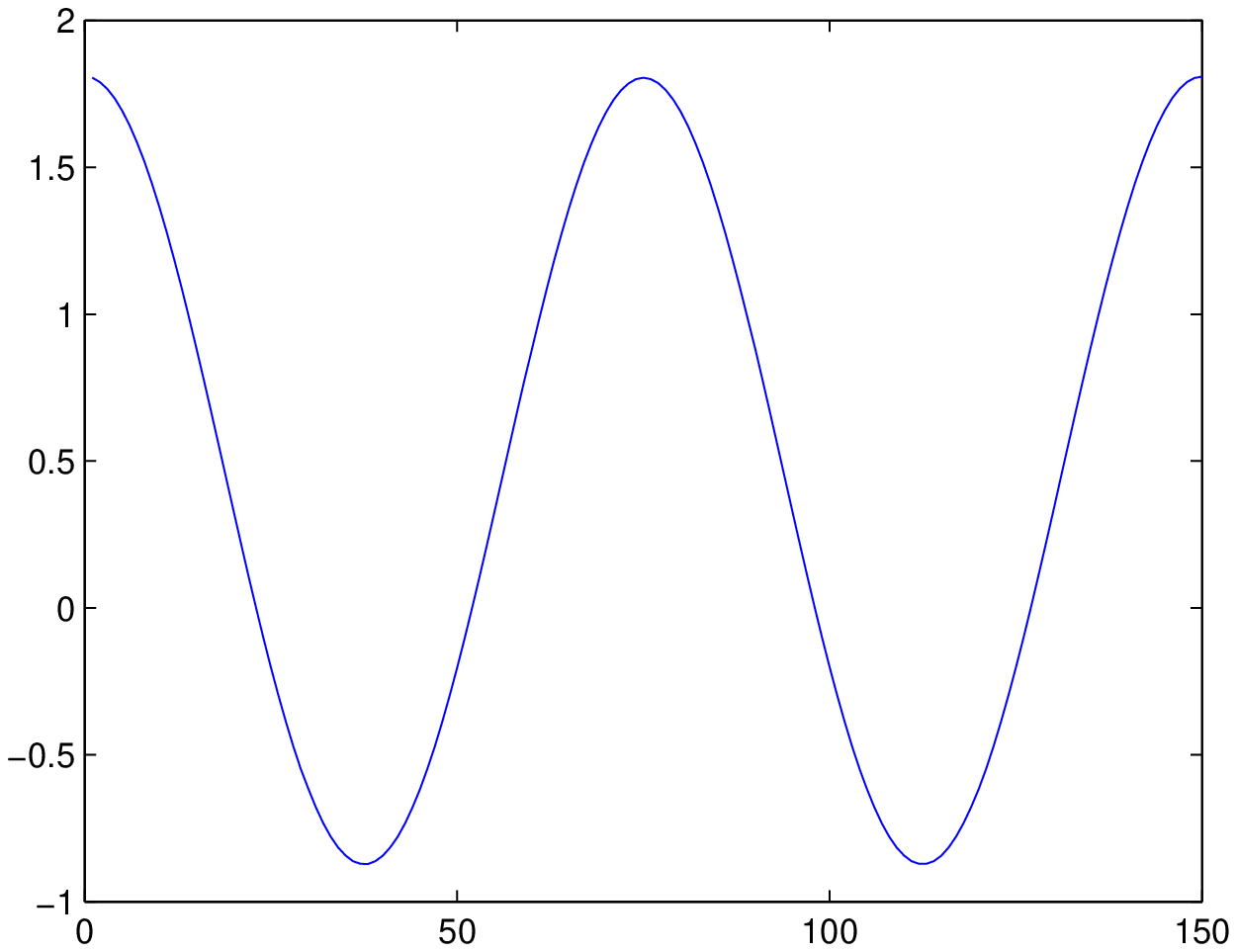}
    \includegraphics[height=4.0cm]{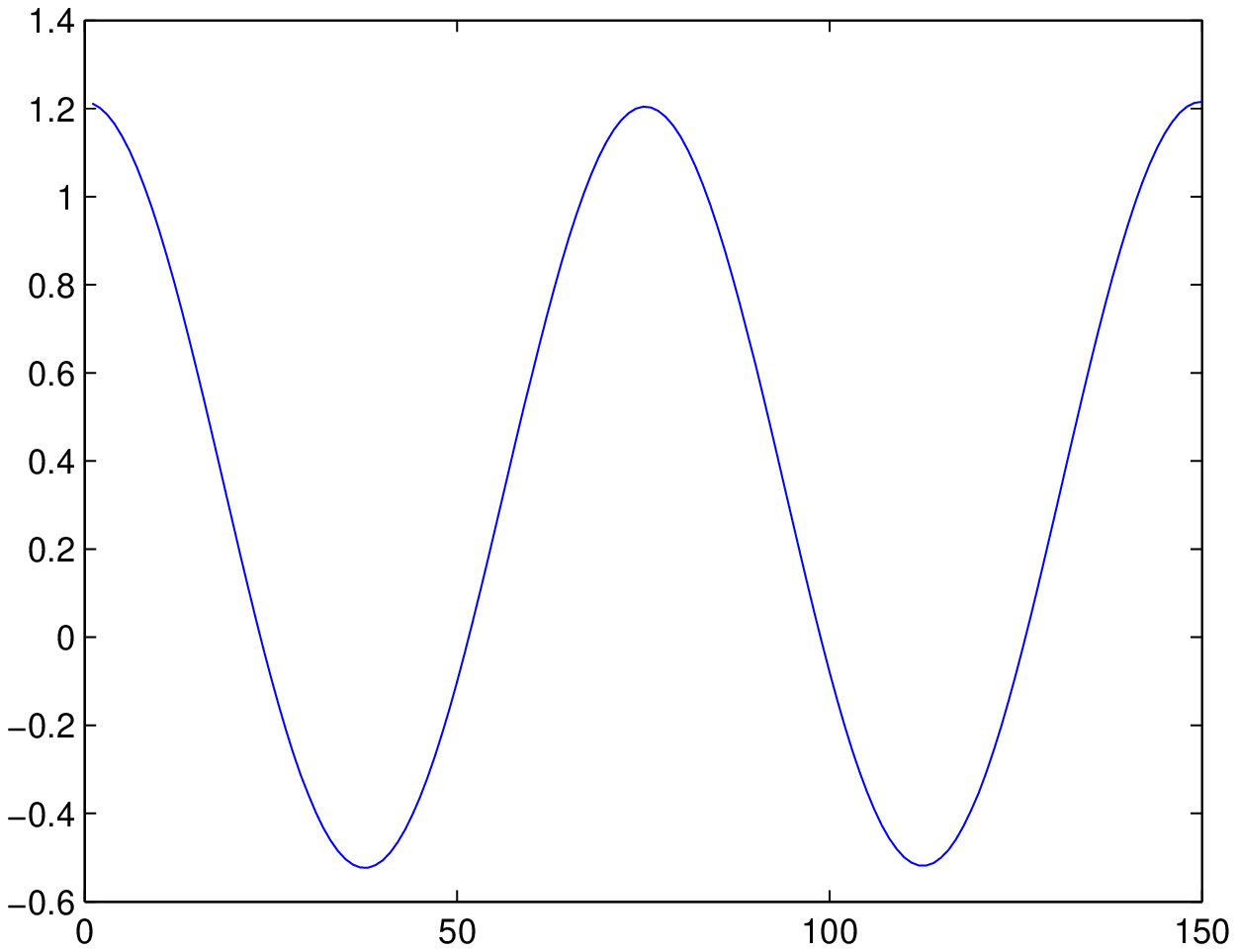}
    \includegraphics[height=4.0cm]{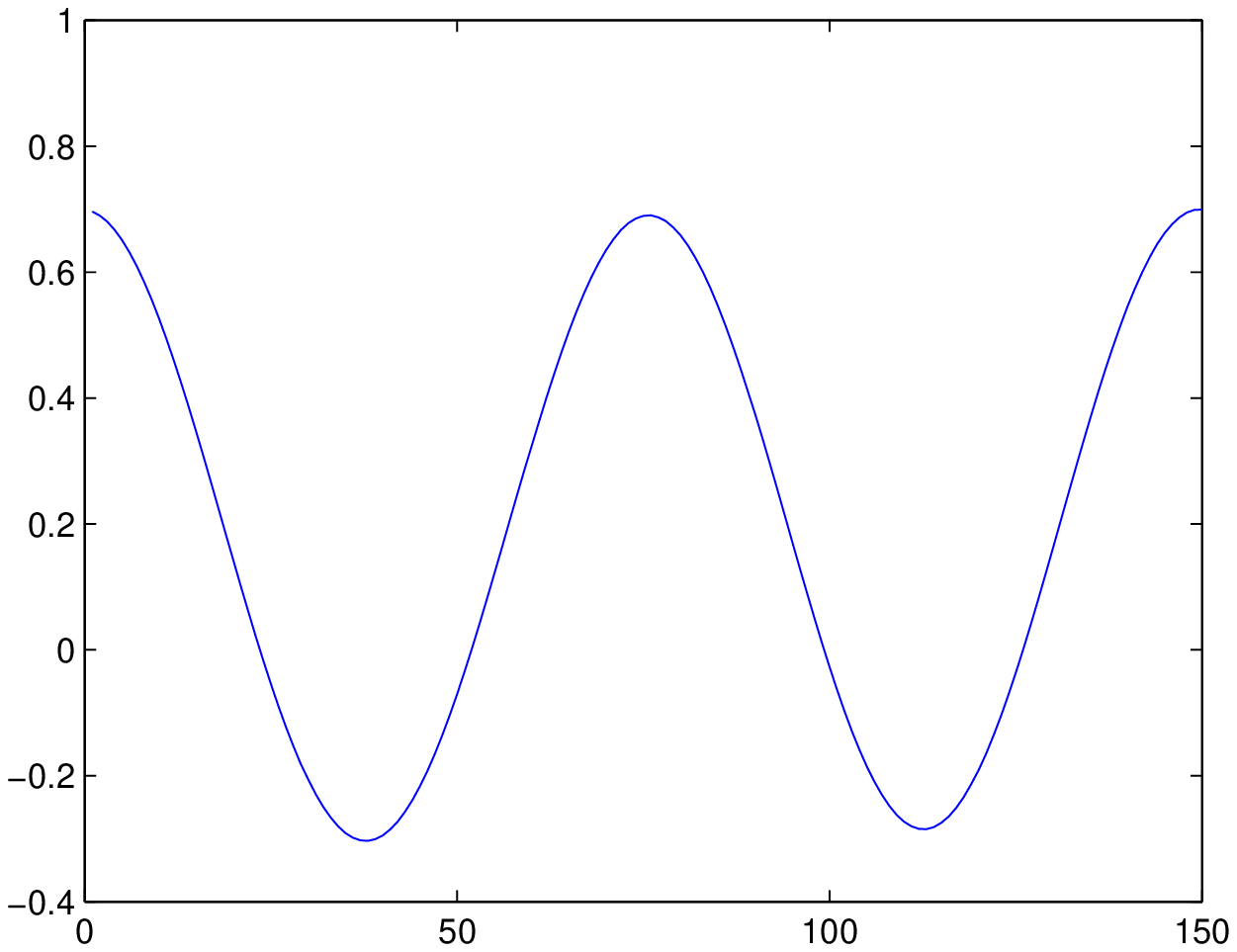}
    \vspace{-0.3cm}
\caption{\small{Second example, from the left to the right, fields of velocity $v_1(x),\;v_5(x),\; v_{10}(x)$ for the lines $n=1,\;n=5$ and $n=10$.}} \label{figure4A.3}
\end{center}
\end{figure}
\begin{figure}[!htb]
\begin{center}
	\includegraphics[height=4.0cm]{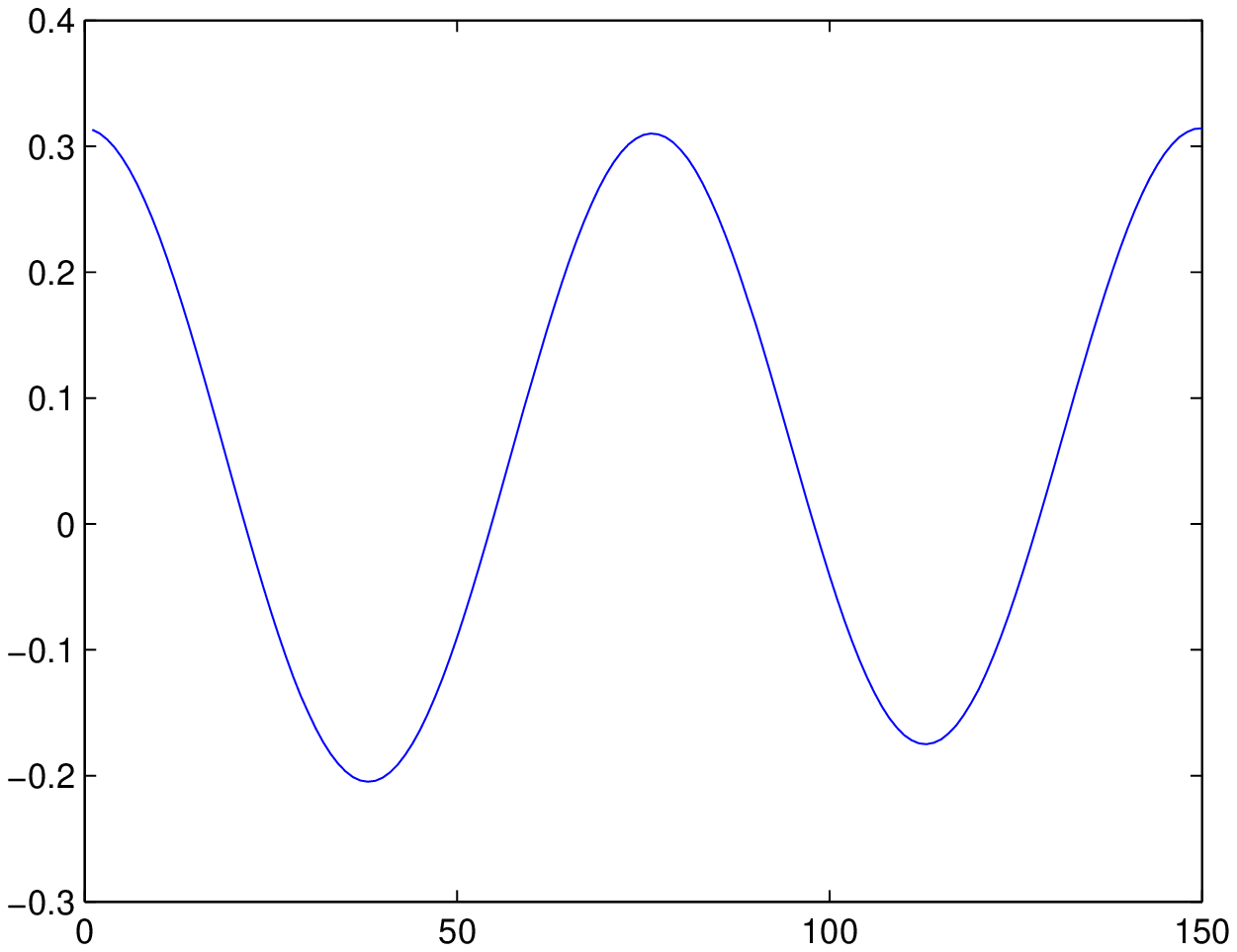}
    \includegraphics[height=4.0cm]{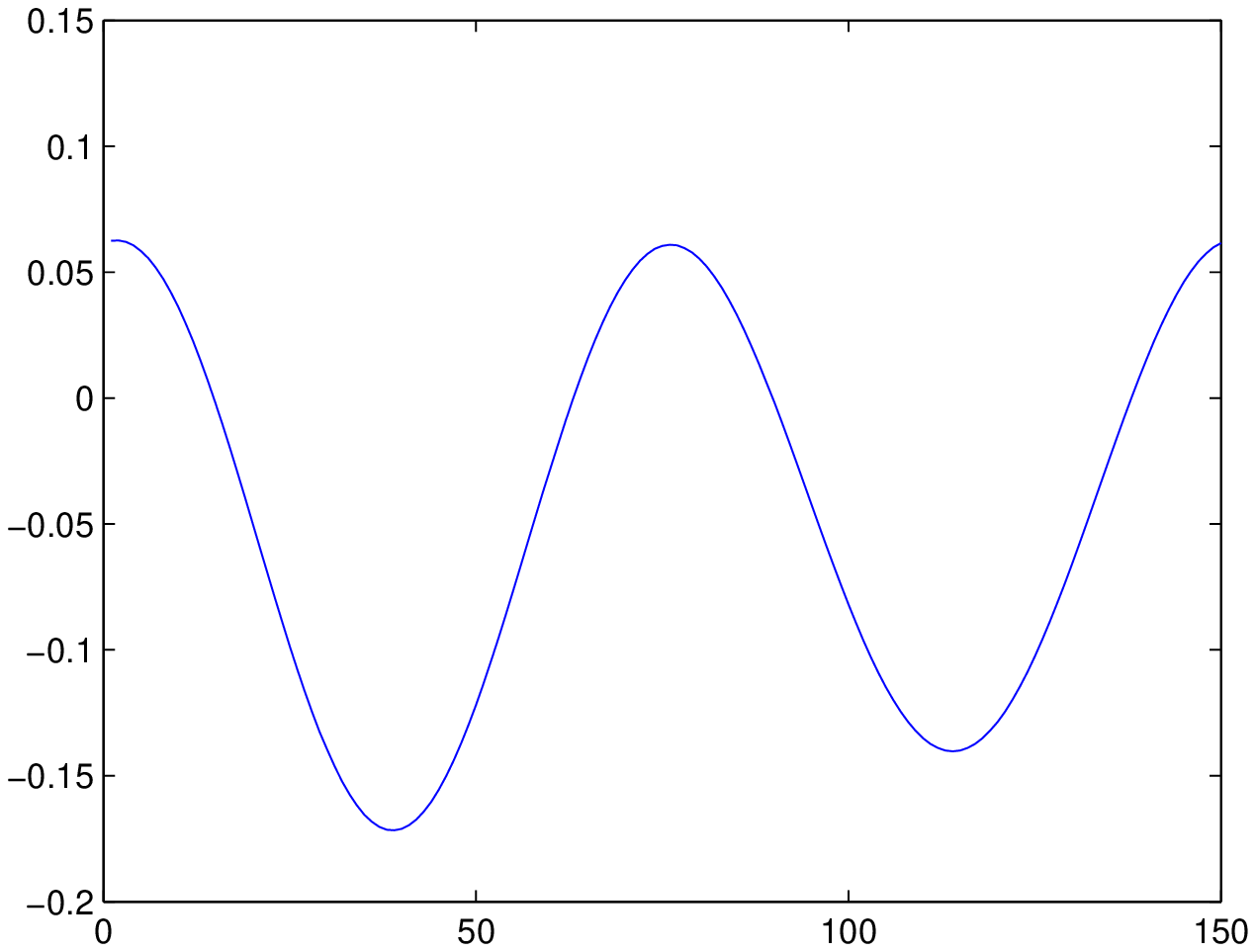}
    \vspace{-0.3cm}
\caption{\small{Second example, from the left to the right, fields of velocity $v_{15}(x),\;v_{19}(x)$ for the lines $n=15$, and $n=19$.}} \label{figure4A.4}
\end{center}
\end{figure}
\begin{figure}[!htb]
\begin{center}
	\includegraphics[height=4.0cm]{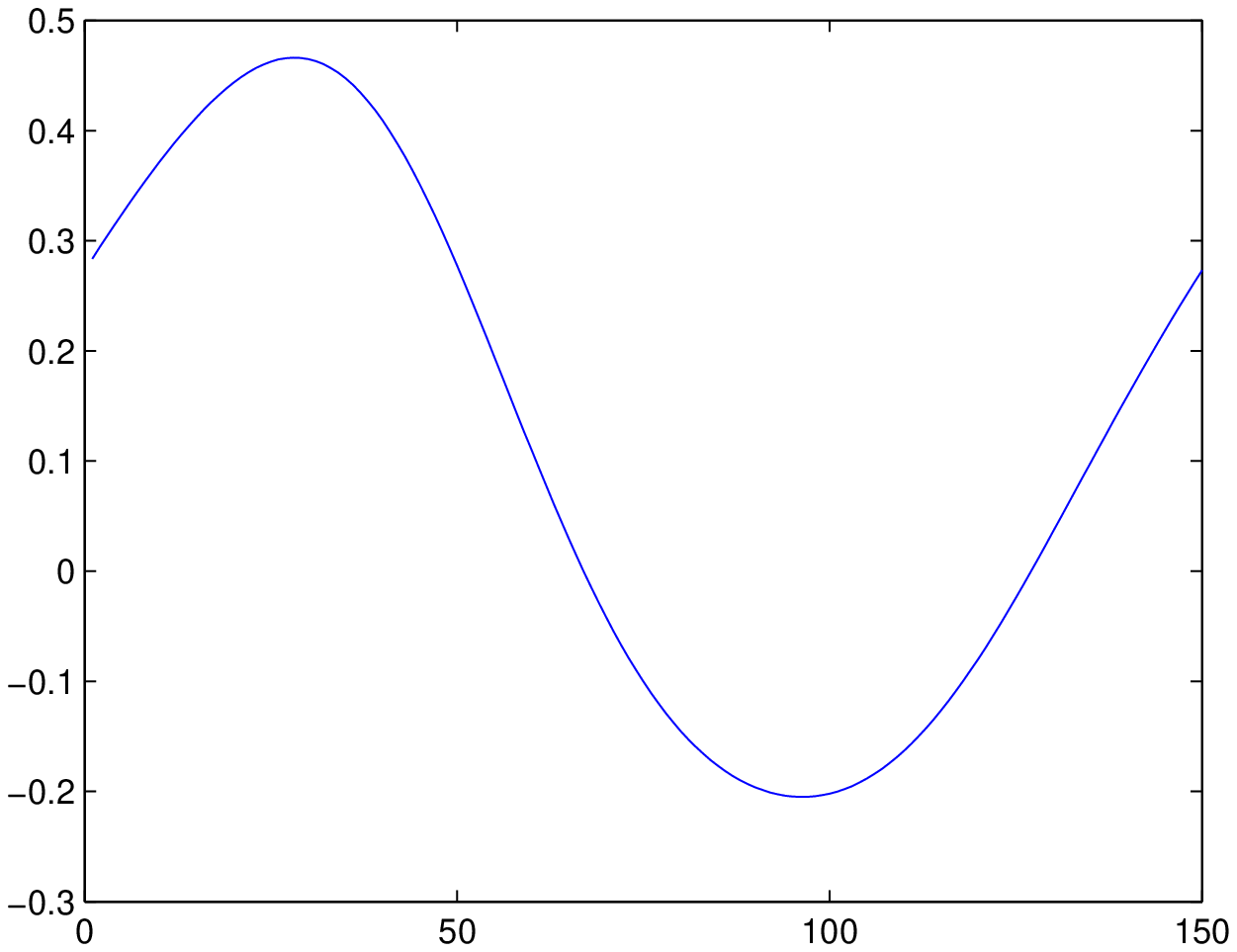}
    \includegraphics[height=4.0cm]{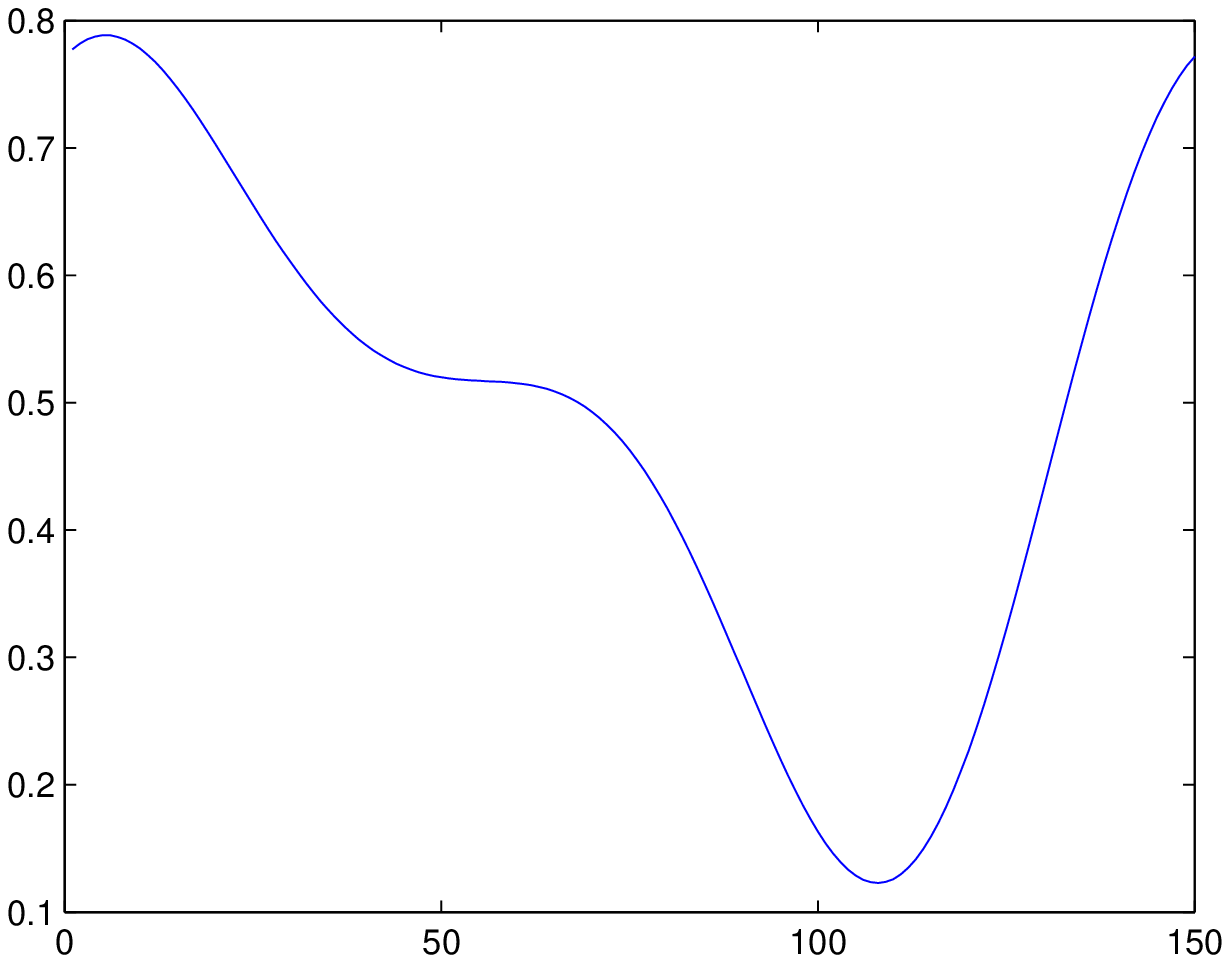}
    \includegraphics[height=4.0cm]{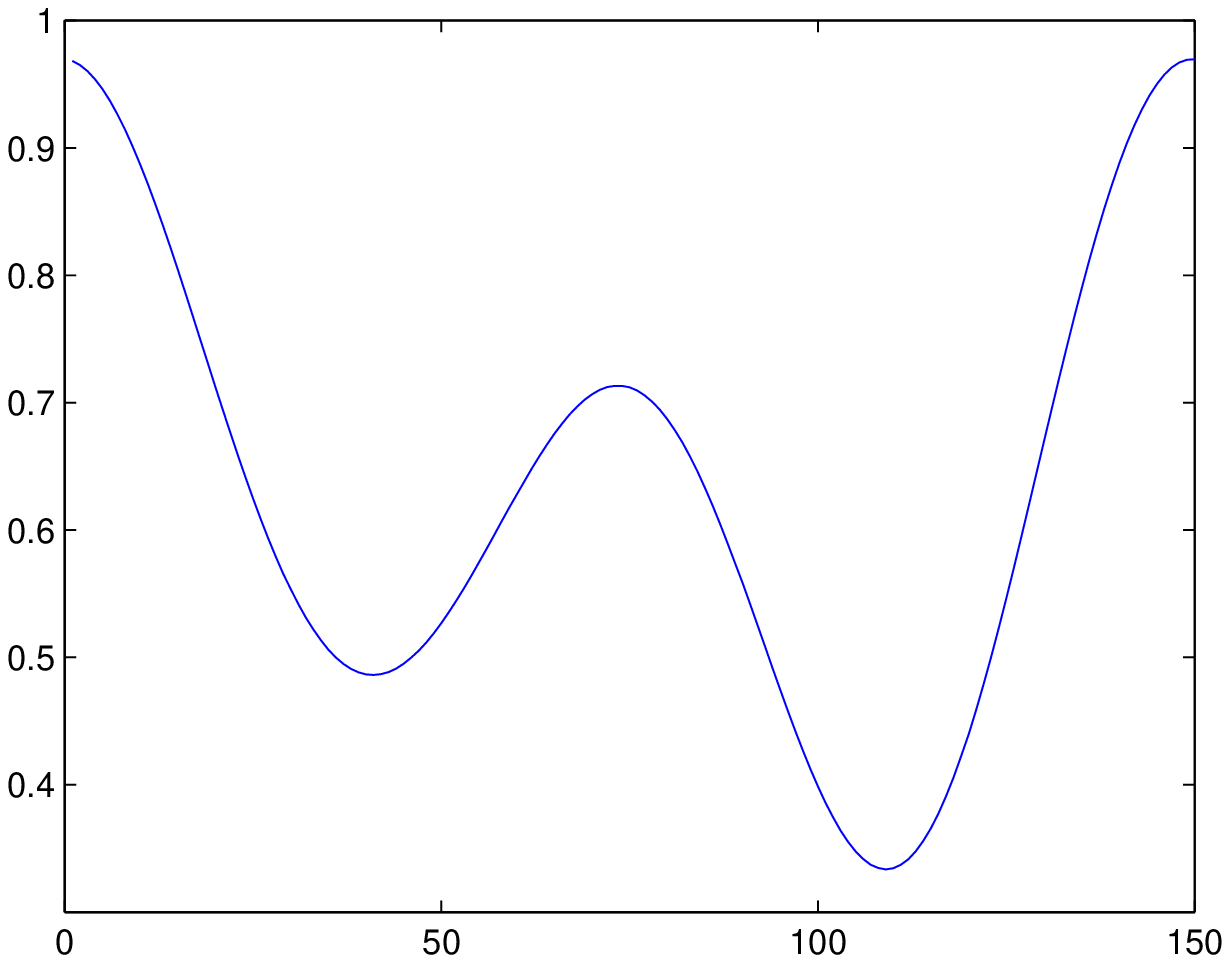}
    \vspace{-0.3cm}
\caption{\small{Second example, from the left to the right, fields of pressure $P_1(x),\;P_5(x),\; P_{10}(x)$ for the lines $n=1,\;n=5$ and $n=10$.}} \label{figure4A.5}
\end{center}
\end{figure}
\begin{figure}[!htb]
\begin{center}
	\includegraphics[height=4.0cm]{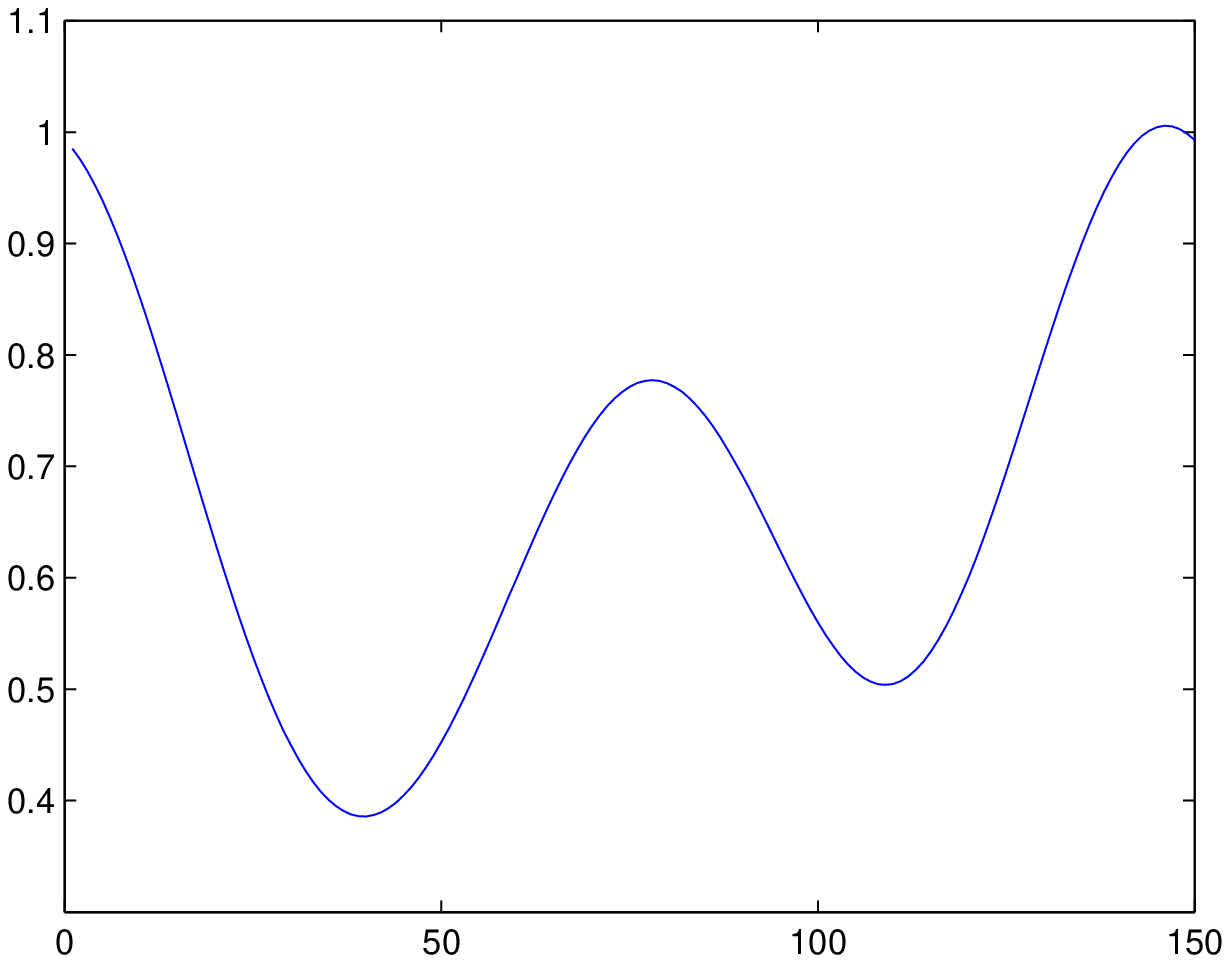}
    \includegraphics[height=4.0cm]{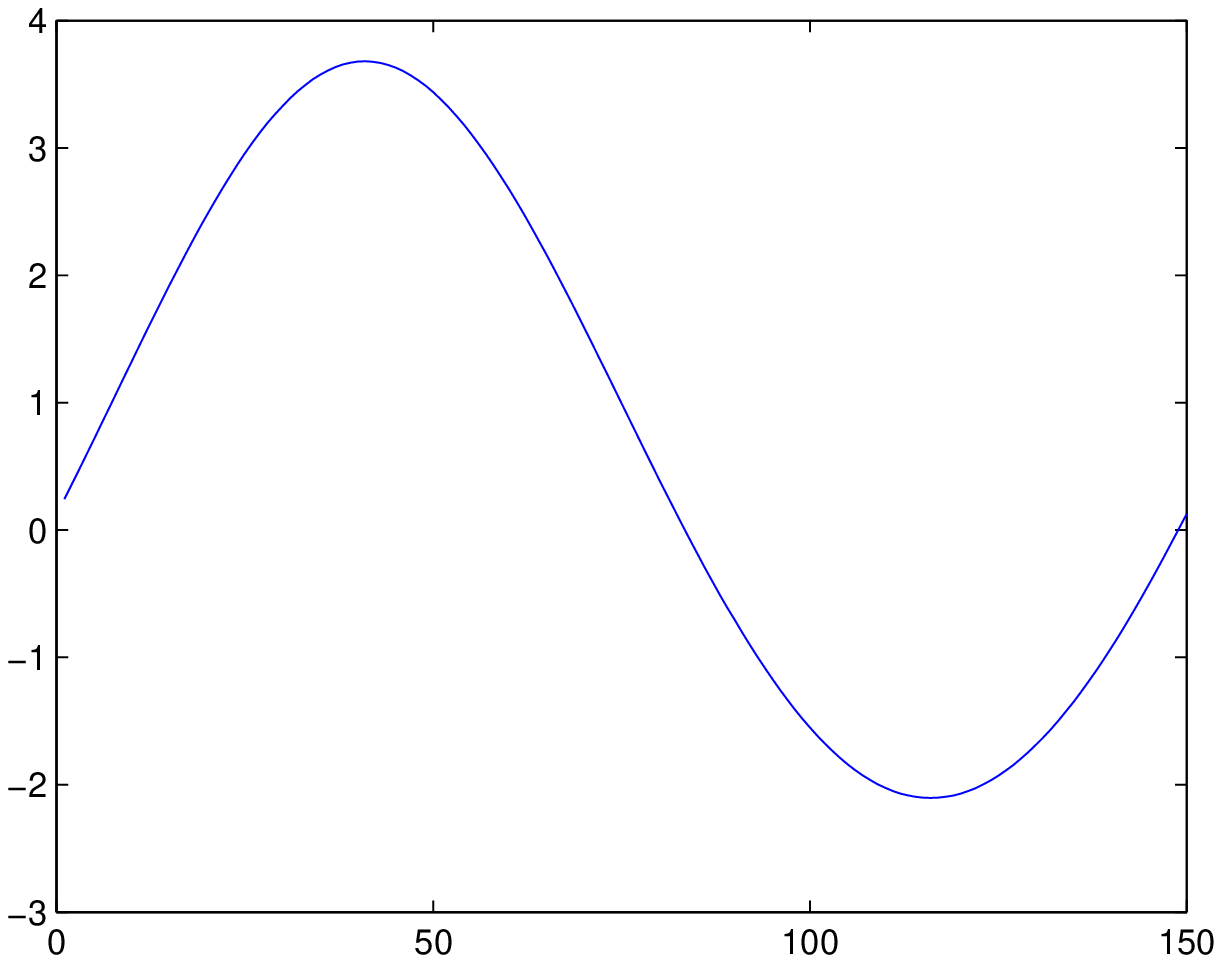}
    \vspace{-0.3cm}
\caption{\small{Second example, from the left to the right, fields of pressure $P_{15}(x),\;P_{19}(x)$ for the lines $n=15$, and $n=19$.}} \label{figure4A.6}
\end{center}
\end{figure}

\section{Conclusion}
In the first part of this article, we obtain a linear system whose the solution solves the time-independent incompressible
 Navier-Stokes system. In the second part, we develop solutions for  two-dimensional examples also for the time-independent incompressible Navier-Stokes system,
 through the generalized method of lines. Considering the values for $J$ obtained, we have got very good approximate solutions
 for the model in question, in a finite differences context.
  The extension of such results to $\mathbb{R}^3$, compressible
and time dependent cases is planned for a future work.


\end{document}